 %&amstex
\documentclass[11pt,a4paper]{amsart}
\usepackage{amsmath,amssymb,amsthm,amsfonts}
\usepackage{mathrsfs}
\usepackage{amscd}
\usepackage{listings}
\usepackage{paralist}
\usepackage{booktabs}
\usepackage[usenames,dvipsnames]{color}
\usepackage{comment}
\usepackage{graphicx}
\usepackage{latexsym}
\usepackage[shortlabels]{enumitem}
\usepackage{thmtools}
\usepackage{url}
\usepackage[all,cmtip]{xy}
\usepackage[bookmarks=false]{hyperref} %hyperref wants to be loaded last

\voffset=5mm
\oddsidemargin=18pt \evensidemargin=18pt
\headheight=9pt     \topmargin=0pt%-24pt
\textheight=620pt   \textwidth=443.pt

\newtheorem{theorem}{Theorem}[section]

\newtheorem{remark}[theorem]{Remark}
\newtheorem{cor}[theorem]{Corollary}
\newenvironment{corollary}{\begin{cor} \em}{\end{cor}}
\newtheorem{tont}[theorem]{Definition}
\newenvironment{definition}{\begin{tont} \em}{\end{tont}}
\newtheorem{lemma}[theorem]{Lemma}

\newtheorem{proposition}[theorem]{Proposition}

\begin{document}

\title [A valuation theorem for Noetherian rings]{A valuation theorem for Noetherian rings}
\author{Antoni Rangachev}
\begin{abstract} 
Let $\mathcal{A} \subset \mathcal{B}$ be integral domains. Suppose $\mathcal{A}$ is Noetherian and $\mathcal{B}$ is a finitely generated $\mathcal{A}$-algebra. Denote by $\overline{\mathcal{A}}$ the integral closure of $\mathcal{A}$ in $\mathcal{B}$. We show that $\overline{\mathcal{A}}$ is determined by finitely many unique  discrete valuation rings. Our result generalizes Rees' classical valuation theorem for ideals. We also obtain a variant of Zariski's main theorem. 
\end{abstract}
\subjclass[2010]{13A18, 13B22, 13A30, 14A15, (14B05).}
\keywords{Integral closure of rings, discrete valuation rings, Rees valuations, Chevalley's constructability result, Zariski's main theorem}
\address{Department of Mathematics\\
 University of Chicago \\
 Chicago, IL 60637\\
Institute of Mathematics \\
and Informatics, Bulgarian Academy of Sciences\\
Akad. G. Bonchev 8, Sofia 1113, Bulgaria}
\maketitle
%\selectlanguage{english}
\section{Introduction} 
Let $\mathcal{A} \subset \mathcal{B}$ be integral domains. Denote the integral closure of $\mathcal{A}$ in $\mathcal{B}$ by $\overline{\mathcal{A}}$. 
Suppose there exist  valuation rings $\mathcal{V}_1, \ldots, \mathcal{V}_r$ in $\mathrm{Frac}(\overline{\mathcal{A}})$ such that 
\begin{equation}\label{decompos}
\overline{\mathcal{A}}= \cap_{i=1}^r \mathcal{V}_i \cap \mathcal{B},
\end{equation}
where the intersection takes place in $\mathrm{Frac}(\mathcal{B})$. We say that (\ref{decompos}) is a {\it valuation decomposition} of $\overline{\mathcal{A}}$. We say the decomposition is {\it irredundant} or {\it minimal} if dropping any $\mathcal{V}_i$ violates (\ref{decompos}). The main result of this paper is the following valuation theorem. 
%Let $\mathcal{B}'$ be the maximal subring of $\mathcal{B}$ with $\mathrm{Frac}(\mathcal{B}')=\mathrm{Frac}(\mathcal{A})$

\begin{theorem}\label{main} Suppose $\mathcal{A}$ is Noetherian and $\mathcal{B}$ is a finitely generated $\mathcal{A}$-algebra. Then one of the following holds:
\begin{enumerate}
    \item [\rm{(i)}] $\overline{\mathcal{A}}=\mathcal{B}$;
    \item [\rm{(ii)}]  $\mathrm{Ass}_{\overline{\mathcal{A}}}(\mathcal{B}/\overline{\mathcal{A}})= \{(0)\}$;
    \item [\rm{(iii)}]  There exist unique discrete valuation rings $\mathcal{V}_1, \ldots, \mathcal{V}_r$ in $\mathrm{Frac}(\overline{\mathcal{A}})$ such that $\overline{\mathcal{A}}= \cap_{i=1}^r \mathcal{V}_i \cap \mathcal{B}$ is minimal. Furthermore, if $\mathcal{A}$ is locally formally equidimensional, then each $\mathcal{V}_i$ is a divisorial valuation ring with respect to a Noetherian subring of $\overline{\mathcal{A}}$.
\end{enumerate}
\end{theorem}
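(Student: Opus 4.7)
The plan is to pass to a Noetherian Krull subring of $\overline{\mathcal{A}}$ sharing its fraction field and to realize the $\mathcal{V}_i$ as localizations at finitely many height-one primes of this subring. First handle the trichotomy: if $\overline{\mathcal{A}}=\mathcal{B}$ we are in (i); if every $\overline{\mathcal{A}}$-associated prime of $\mathcal{B}/\overline{\mathcal{A}}$ is $(0)$ we are in (ii). Otherwise there is a nonzero $\mathfrak{p}=\mathrm{Ann}_{\overline{\mathcal{A}}}(b+\overline{\mathcal{A}})$ for some $b\in\mathcal{B}$; a nonzero $a\in\mathfrak{p}$ satisfies $ab\in\overline{\mathcal{A}}$, so $b=(ab)/a$ lies in $K:=\mathrm{Frac}(\overline{\mathcal{A}})$ and is algebraic over $\mathrm{Frac}(\mathcal{A})$.

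Next, set up the Noetherian Krull scaffold. Because $\mathcal{B}$ is finitely generated over $\mathcal{A}$, the field $K$ is a finite extension of $\mathrm{Frac}(\mathcal{A})$. Pick $c_1,\ldots,c_s\in\overline{\mathcal{A}}$ generating $K$ as an extension of $\mathrm{Frac}(\mathcal{A})$, let $\mathcal{A}'=\mathcal{A}[c_1,\ldots,c_s]$ (module-finite over $\mathcal{A}$, hence Noetherian, with $\mathrm{Frac}(\mathcal{A}')=K$), and let $\overline{\mathcal{A}}'$ be the integral closure of $\mathcal{A}'$ in $K$. By the Mori--Nagata theorem, $\overline{\mathcal{A}}'$ is a Krull domain, so $\overline{\mathcal{A}}'=\bigcap_\mathfrak{P}(\overline{\mathcal{A}}')_\mathfrak{P}$ over its height-one primes, each $(\overline{\mathcal{A}}')_\mathfrak{P}$ being a DVR of $K$. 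Since any element of $\overline{\mathcal{A}}'$ lying in $\mathcal{B}$ is integral over $\mathcal{A}$, one has $\overline{\mathcal{A}}=\overline{\mathcal{A}}'\cap\mathcal{B}$, giving the a priori infinite valuation decomposition $\overline{\mathcal{A}}=\bigcap_{\mathfrak{P}}(\overline{\mathcal{A}}')_\mathfrak{P}\cap\mathcal{B}$.

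The key step is to pare this intersection to finitely many essential components. The plan is to proceed in Rees style: writing $\mathcal{B}=\mathcal{A}'[b_1,\ldots,b_n]$, for each algebra generator $b_i\in K$ one applies Rees' classical valuation theorem to its denominator ideal $D_i:=\{a\in\mathcal{A}':ab_i\in\mathcal{A}'\}$, producing finitely many Rees DVRs in $K$. Transcendental $b_i$ (those not in $K$) contribute nothing, their classes in $\mathcal{B}/\overline{\mathcal{A}}$ having trivial annihilator. Take the finite union of all these Rees DVRs as the candidate list, and verify, via extending valuations from $K$ to $\mathrm{Frac}(\mathcal{B})$ by Gauss's construction along a transcendence basis, that they jointly capture $\overline{\mathcal{A}}$ inside $\mathcal{B}$. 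Dropping redundancies gives the minimal list $\mathcal{V}_1,\ldots,\mathcal{V}_r$; uniqueness of such a minimal list is standard, each $\mathcal{V}_i$ being intrinsically detected by a localization of $\mathcal{B}/\overline{\mathcal{A}}$. For the divisorial refinement, when $\mathcal{A}$ (hence $\mathcal{A}'$) is locally formally equidimensional, each Rees valuation is divisorial with respect to a Noetherian subring by the Ratliff--Rees--Katz theory of asymptotic primes. The principal obstacle is the sufficiency verification: the Rees valuations of the generators' denominator ideals must be shown to detect every element of $(\mathcal{B}\cap K)\setminus\overline{\mathcal{A}}$, the subtlety being that polynomial combinations of transcendental generators may land in $K$ with unexpected pole structure. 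An alternative route via Chevalley's constructibility theorem applied to $\mathrm{Spec}(\mathcal{B})\to\mathrm{Spec}(\mathcal{A}')$ may extract the essential primes as codimension-one components of a constructible locus, providing a cleaner finiteness argument.
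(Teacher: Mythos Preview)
Your proposal contains a genuine gap that you yourself flag but do not close: the finite list of DVRs built from the Rees valuations of the denominator ideals of those generators $b_i$ that happen to lie in $K=\mathrm{Frac}(\overline{\mathcal{A}})$ need not detect every element of $(\mathcal{B}\cap K)\setminus\overline{\mathcal{A}}$. A concrete failure: take $\mathcal{A}=k[x]$ and $\mathcal{B}=\mathcal{A}[b_1,b_2]$ with $b_1=t$ transcendental over $k(x)$ and $b_2=1/x-t$. Then $\mathcal{B}=k[x,1/x,t]$, $K=k(x)$, $\overline{\mathcal{A}}=k[x]$, and neither generator lies in $K$, so your candidate list is empty; yet $b_1+b_2=1/x\in(\mathcal{B}\cap K)\setminus\overline{\mathcal{A}}$, and the correct decomposition requires the single DVR $\mathcal{V}_1=k[x]_{(x)}$. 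The Gauss-extension step does not rescue this, since extending each $\mathcal{V}_i$ to $\mathrm{Frac}(\mathcal{B})$ is vacuous when there are no $\mathcal{V}_i$. One might hope to repair the argument by first choosing generators so that those lying in $K$ already generate $\mathcal{B}\cap K$ as an $\mathcal{A}'$-algebra, but you give no argument that $\mathcal{B}\cap K$ is a finitely generated $\mathcal{A}'$-algebra, and this is not automatic.

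The paper avoids this obstacle by a different mechanism. Rather than approaching $\overline{\mathcal{A}}$ from outside via the Krull overring $\overline{\mathcal{A}}'$ and then pruning, it works intrinsically with $\mathrm{Ass}_{\overline{\mathcal{A}}}(\mathcal{B}/\overline{\mathcal{A}})$: it shows this set is finite (Prp.\ \ref{finite}), sets $\mathcal{V}_i:=\overline{\mathcal{A}}_{\mathfrak{q}_i}$ for the nonzero associated primes $\mathfrak{q}_i$, and invokes \cite[Thm.\ 1.1 (i)]{Rangachev2} to see that each $\overline{\mathcal{A}}_{\mathfrak{q}_i}$ is a DVR. The step replacing your ``sufficiency verification'' is Prp.\ \ref{minimal}: for any $b\in\mathcal{B}$ with nonunit conductor $J=(\overline{\mathcal{A}}:_{\overline{\mathcal{A}}}b)$, every minimal prime of $J$ already lies in $\mathrm{Ass}_{\overline{\mathcal{A}}}(\mathcal{B}/\overline{\mathcal{A}})$; this is established by passing to a completion (where the relevant extension becomes module-finite, hence Noetherian) to force a power of each minimal prime into $J$ locally. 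The decomposition $\overline{\mathcal{A}}=\bigcap_i\mathcal{V}_i\cap\mathcal{B}$ then follows without ever tracking how a given $b$ is expressed in the generators. Your Chevalley-constructibility alternative is closer in spirit to the paper's Thm.\ \ref{Chevalley}, but there constructibility is used to locate the centers of the $V_i$ in $\mathrm{Spec}(R)$, not to produce the finiteness of the $\mathcal{V}_i$.
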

It's well-known that $\overline{\mathcal{A}}$ may fail to be Noetherian \cite[Ex.\ 4.10]{Huneke}. The proof of Thm.\ \ref{main} rests upon three key observations. First, we show that  there exists $f \in \overline{\mathcal{A}}$ such that $\overline{\mathcal{A}}_f$ is Noetherian. Then we use this to prove that $\mathrm{Ass}_{\overline{\mathcal{A}}}(\mathcal{B}/\overline{\mathcal{A}})$ is finite by results of \cite{Rangachev2}. We set each $\mathcal{V}_i$ to be the localization of $\overline{\mathcal{A}}$ at a prime  in $\mathrm{Ass}_{\overline{\mathcal{A}}}(\mathcal{B}/\overline{\mathcal{A}})$. Then $\mathcal{V}_i$ is a DVR by \cite[Thm.\ 1.1 (i)]{Rangachev2}. Finally, to get 
the equality in (\ref{decompos}) we show that the minimal primes of an ideal in $\overline{\mathcal{A}}$ which is the annihilator of an element of $\mathcal{B}/\overline{\mathcal{A}}$ are in $\mathrm{Ass}_{\overline{\mathcal{A}}}(\mathcal{B}/\overline{\mathcal{A}})$. 
As another application of these observations we obtain a variant of Zariski's main theorem. 

Let $R$ be a Noetherian domain. Suppose $\mathcal{A}=\oplus_{i=0}^{\infty}\mathcal{A}_i \subset \mathcal{B}=\oplus_{i=0}^{\infty}\mathcal{B}_i$ is a homogeneous inclusion of graded Noetherian domains with $\mathcal{A}_0=\mathcal{B}_0=R$. Suppose $\mathcal{B}$ is a finitely generated $\mathcal{A}$-algebra. For each $n$ denote by $\overline{\mathcal{A}_n}$ the integral closure of $\mathcal{A}_n$ in $\mathcal{B}_n$. It's the $R$-module consisting of all elements in $\mathcal{B}_n$ that are integral over $\mathcal{A}$. For the discrete valuations $\mathcal{V}_i$ in Thm.\ \ref{main} set $V_i: = \mathcal{V}_i \cap \mathrm{Frac}(R)$. Define $\mathcal{A}_{n}V_i \cap \mathcal{B}_n$ to be the set of elements in $\mathcal{B}_n$ that map to $\mathcal{A}_{n}V_i$ as a submodule of $\mathcal{B}_{n}V_i$. The following is a corollary to our main result.

\begin{corollary}\label{graded}
 Suppose $\mathcal{A}V=\overline{\mathcal{A}}V$ for each valuation $V$ in $\mathrm{Frac}(R)$. Then one of the following holds:
\begin{enumerate}
    \item [\rm{(i)}] $\overline{\mathcal{A}}=\mathcal{B}$;
    \item [\rm{(ii)}]  $\mathrm{Ass}_{\overline{\mathcal{A}}}(\mathcal{B}/\overline{\mathcal{A}})= \{(0)\}$;
    \item [\rm{(iii)}]
For each $n$ we have the following valuation decomposition $\overline{\mathcal{A}_n}= \cap_{i=1}^{r} \mathcal{A}_{n}V_{i} \cap \mathcal{B}_n$. Furthermore, if $V_i \neq \mathrm{Frac}(R)$ for $i=1, \ldots, r$, then the valuation decomposition is minimal and the $V_i$s are unique. 
\end{enumerate}
\end{corollary}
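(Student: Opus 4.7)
The plan is to apply Theorem~\ref{main} to $\mathcal{A} \subset \mathcal{B}$ and specialize its conclusions to each graded piece. Cases (i) and (ii) of Theorem~\ref{main} transfer directly to the corresponding cases of the corollary, so assume we are in case (iii): we obtain DVRs $\mathcal{V}_i = \overline{\mathcal{A}}_{P_i}$ with $P_i \in \mathrm{Ass}_{\overline{\mathcal{A}}}(\mathcal{B}/\overline{\mathcal{A}})$ giving a minimal decomposition $\overline{\mathcal{A}} = \cap_i \mathcal{V}_i \cap \mathcal{B}$, and put $V_i := \mathcal{V}_i \cap \mathrm{Frac}(R)$. Since $\mathcal{A}\subset\mathcal{B}$ is a homogeneous inclusion of graded domains, $\overline{\mathcal{A}}$ is graded, so $\overline{\mathcal{A}_n} = \overline{\mathcal{A}}\cap \mathcal{B}_n$; intersecting the valuation decomposition with $\mathcal{B}_n$ yields $\overline{\mathcal{A}_n} = \cap_i(\mathcal{V}_i\cap\mathcal{B}_n)$. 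Because $\mathcal{B}/\overline{\mathcal{A}}$ is a graded $\overline{\mathcal{A}}$-module, each $P_i$ is homogeneous, and then each valuation $v_i$ is itself homogeneous in the sense that $v_i(\sum_n b_n) = \min_n v_i(b_n)$ for $b_n \in \mathcal{B}_n$.

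Next I convert the decomposition from $\mathcal{V}_i$ to $V_i$ by a sandwich argument. Restricting $\mathcal{A} V_i = \overline{\mathcal{A}} V_i$ to degree $n$ (both sides are graded and $V_i$ lies in degree $0$) gives $\mathcal{A}_n V_i = \overline{\mathcal{A}_n} V_i$, whence $\overline{\mathcal{A}_n} \subseteq \mathcal{A}_n V_i \cap \mathcal{B}_n$. In the other direction $\mathcal{A}_n V_i \subseteq \mathcal{V}_i$ because $\mathcal{A}_n \subseteq \overline{\mathcal{A}} \subseteq \mathcal{V}_i$ and $V_i \subseteq \mathcal{V}_i$, so $\mathcal{A}_n V_i \cap \mathcal{B}_n \subseteq \mathcal{V}_i \cap \mathcal{B}_n$. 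Combining,
\[
\overline{\mathcal{A}_n} \;\subseteq\; \bigcap_i (\mathcal{A}_n V_i \cap \mathcal{B}_n) \;\subseteq\; \bigcap_i (\mathcal{V}_i \cap \mathcal{B}_n) \;=\; \overline{\mathcal{A}_n},
\]
so all inclusions are equalities, establishing the decomposition in (iii).

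For minimality and uniqueness under the hypothesis $V_i \neq \mathrm{Frac}(R)$, the plan is to propagate a hypothetical graded-level redundancy back to a redundancy in Theorem~\ref{main}'s decomposition, contradicting its minimality. Assume $V_{i_0}$ is redundant; by Theorem~\ref{main} pick $b \in \mathcal{B}$ with $v_{i_0}(b) < 0$ and $v_i(b) \geq 0$ for $i \neq i_0$, and use homogeneity of the $v_i$ to replace $b$ by a homogeneous component $b \in \mathcal{B}_{n_0}$ with the same inequalities. Since $P_i$ is homogeneous, I would write $b = a/t$ with $a, t \in \overline{\mathcal{A}}$ homogeneous and $t \notin P_i$; the hypothesis $\mathcal{A} V_i = \overline{\mathcal{A}} V_i$ rewrites $a$ as a $V_i$-combination of $\mathcal{A}$-elements, and the integrality of $\overline{\mathcal{A}}$ over $\mathcal{A}$ combined with a lying-over argument lets one replace $t$ by an element of $\mathcal{A}\setminus(P_i\cap\mathcal{A})$; this yields $b \in \mathcal{A}_{n_0}V_i$ for each $i \neq i_0$. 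The assumed redundancy then forces $b \in \overline{\mathcal{A}_{n_0}}$, contradicting $v_{i_0}(b) < 0$. Uniqueness follows by a symmetric matching argument against any other minimal graded decomposition. I expect the main obstacle to be this last technical step—showing $\mathcal{V}_i \cap \mathcal{B}_n \subseteq \mathcal{A}_n V_i$ on homogeneous elements—since it is not purely formal, and it crucially combines the homogeneous-prime structure of $P_i$ with the integral-extension properties of $\overline{\mathcal{A}}/\mathcal{A}$ and the hypothesis $\mathcal{A} V_i = \overline{\mathcal{A}} V_i$.
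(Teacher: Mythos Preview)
Your derivation of the graded decomposition $\overline{\mathcal{A}_n}=\bigcap_i \mathcal{A}_nV_i\cap\mathcal{B}_n$ via the sandwich $\overline{\mathcal{A}}\subseteq\mathcal{A}V_i\subseteq\mathcal{V}_i$ is exactly the paper's argument, and the auxiliary observations (homogeneity of the $P_i$, hence of the component decomposition) are correct and harmless.

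Where you diverge is in minimality and uniqueness. The paper never tries to prove the reverse inclusion $\mathcal{V}_i\cap\mathcal{B}_n\subseteq\mathcal{A}_nV_i$ that you flag as the main obstacle. Instead it argues entirely by \emph{localization at $\mathfrak{q}_i$}: given a competing minimal decomposition $\overline{\mathcal{A}}=\bigcap_j \mathcal{A}V_j'\cap\mathcal{B}$, localize both sides at $\mathfrak{q}_i$. Since $\overline{\mathcal{A}}_{\mathfrak{q}_i}\neq\mathcal{B}_{\mathfrak{q}_i}$, some $(\mathcal{A}V_j')_{\mathfrak{q}_i}$ must be a proper subring of $\mathrm{Frac}(\overline{\mathcal{A}})$; it contains the DVR $\mathcal{V}_i$, hence equals it, and intersecting with $K=\mathrm{Frac}(R)$ forces $V_i=V_j'$. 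Minimality is handled the same way. This completely sidesteps your obstacle.

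Your proposed route to the obstacle---write $b=a/t$ with $t\in\overline{\mathcal{A}}_k\setminus P_i$ homogeneous, then use $\overline{\mathcal{A}}V_i=\mathcal{A}V_i$ and a lying-over trick to replace $t$ by some $s\in\mathcal{A}_k\setminus P_i$---does produce such an $s$ (since $t=\sum\lambda_js_j$ with $\lambda_j\in V_i$, $s_j\in\mathcal{A}_k$, and $v_i(t)=0$ forces some $v_i(s_{j_0})=0$). But you then need $sb\in\mathcal{A}_{n+k}V_i$ \emph{and} a way to divide by $s$ within the $V_i$-module $\mathcal{B}_nV_i$; neither step is supplied, and it is not clear that multiplication by $s\in\mathcal{A}_k$ induces an isomorphism $\mathcal{B}_nV_i\to s\mathcal{B}_nV_i$ carrying $\mathcal{A}_nV_i$ onto $\mathcal{A}_{n+k}V_i\cap s\mathcal{B}_nV_i$. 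So as written this is a genuine gap. The localization argument is both shorter and avoids it; I would replace your last paragraph with that.
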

Let $I$ be an ideal in a Noetherian domain $R$. Let $t$ be a variable. The graded algebra $R[It]:= R \oplus It \oplus I^2t^2 \oplus \cdots$ is called the {\it Rees algebra} of $I$. It's contained in the polynomial ring $R[t]:= R \oplus Rt \oplus Rt^2 \oplus \cdots$. For each $n$ denote by $\overline{I^n}$ the integral closure of $I^n$ in $R$. Set $\mathcal{A}:=R[It]$ and $\mathcal{B}:=R[t]$ in Cor.\ \ref{graded}. Note that for each valuation ring $V$ in $\mathrm{Frac}(R)$ we have $\mathcal{A}V=V[t]$ or $\mathcal{A}V=V[at]$ where $IV=(a)$ for some $a \in I$. Thus $\mathcal{A}V$ is integrally closed, and so $\mathcal{A}V=\overline{\mathcal{A}}V$.  Cor.\ \ref{graded} recovers a classical result due to Rees \cite{Rees56}. 
\begin{corollary}[Rees' valuation theorem]\label{Rees} Let $R$ be a Noetherian domain and $I$ be a nonzero ideal in $R$. There exists unique discrete valuations $V_1, \ldots, V_r$ in the field of fractions of $R$ such that $\overline{I^n}= \cap_{i=1}^r I^{n}V_i \cap R$  for each $n$. \end{corollary}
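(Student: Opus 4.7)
The plan is to apply Corollary~\ref{graded} with $\mathcal{A}=R[It]$ and $\mathcal{B}=R[t]$, as already flagged in the paragraph preceding the statement. First I would verify the hypotheses: $\mathcal{A}$ and $\mathcal{B}$ are graded Noetherian domains with $\mathcal{A}_0=\mathcal{B}_0=R$, and $\mathcal{B}=\mathcal{A}[t]$ is finitely generated over $\mathcal{A}$. The assumption $\mathcal{A}V=\overline{\mathcal{A}}V$ for every valuation ring $V$ of $\mathrm{Frac}(R)$ is checked in the excerpt: the finitely generated ideal $IV$ is principal in the valuation ring $V$, say $IV=(a)$, so $\mathcal{A}V=V[at]$ is a polynomial ring over $V$ and hence integrally closed in $\mathcal{B}V=V[t]$.

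Next I would identify which of the three alternatives of Corollary~\ref{graded} occurs. Assuming $I\neq(0)$ (otherwise the statement is vacuous), alternative (i), $\overline{\mathcal{A}}=\mathcal{B}$, would force $t$ to satisfy a monic integral equation over $R[It]$; inspecting the top-degree homogeneous part yields $1\in I$, hence $I=R$ and $\overline{I^n}=R$ trivially. Alternative (ii) is ruled out when $I\subsetneq R$: for any $0\neq r\in I$ the product $r\bar t=\overline{rt}$ vanishes in $\mathcal{B}/\overline{\mathcal{A}}$ because $rt\in\mathcal{A}$, so $\mathrm{Ann}(\bar t)\supseteq I\neq(0)$ while $\bar t\neq 0$; by the observation cited in the introduction, the minimal primes of $\mathrm{Ann}(\bar t)$ belong to $\mathrm{Ass}_{\overline{\mathcal{A}}}(\mathcal{B}/\overline{\mathcal{A}})$ and are nonzero, contradicting (ii).

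So alternative (iii) holds, and its conclusion is translated to the statement of the corollary by the $R$-module isomorphism $\mathcal{B}_n=Rt^n\cong R$, $rt^n\mapsto r$, which sends $\mathcal{A}_n=I^nt^n$ to $I^n$, identifies $\overline{\mathcal{A}_n}$ with $\overline{I^n}$, and matches $\mathcal{A}_nV_i\cap\mathcal{B}_n$ with $I^nV_i\cap R$. This yields $\overline{I^n}=\cap_{i=1}^r I^nV_i\cap R$ for every $n$.

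The remaining obstacle is the minimality and uniqueness clause, for which Corollary~\ref{graded} asks that $V_i\neq\mathrm{Frac}(R)$ for every $i$. Each $\mathcal{V}_i$ is the localization of $\overline{\mathcal{A}}$ at a homogeneous prime $\mathfrak{p}_i\in\mathrm{Ass}_{\overline{\mathcal{A}}}(\mathcal{B}/\overline{\mathcal{A}})$; say $\mathfrak{p}_i=\mathrm{Ann}(\bar b)$ with $\bar b$ homogeneous of degree $n$. Because $I^n\cdot Rt^n\subseteq I^nt^n=\mathcal{A}_n\subseteq\overline{\mathcal{A}}$, the ideal $I^n$ annihilates every class in degree $n$, so $I^n\subseteq\mathfrak{p}_i\cap R$. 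Thus $\mathfrak{p}_i\cap R$ contains $I\neq(0)$, and the maximal ideal of $V_i=\mathcal{V}_i\cap\mathrm{Frac}(R)$ contains nonzero elements of $R$, giving $V_i\neq\mathrm{Frac}(R)$ as required.
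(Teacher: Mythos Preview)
Your proof is correct and follows the paper's approach of applying Corollary~\ref{graded} to $\mathcal{A}=R[It]\subset\mathcal{B}=R[t]$; you are simply more explicit than the paper in disposing of alternatives (i) and (ii). For the verification that $V_i\neq\mathrm{Frac}(R)$, your annihilator argument showing $I\subseteq\mathfrak{q}_i\cap R$ (because $I^n$ kills every degree-$n$ class in $\mathcal{B}/\overline{\mathcal{A}}$) is a clean variant of the paper's, which instead reaches $\mathrm{ht}(\mathfrak{q}_i\cap R)\geq 1$ by arguing via the structure of primes in $R[t]$.
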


In the setting of Cor.\ \ref{graded} assume additionally that $R$ is locally formally equidimensional.
We can give a geometric interpretation of the centers of the $V_i$s in $R$ using Chevalley's constructability result as follows.  Consider the structure map $c \colon \mathrm{Proj}(\mathcal{A}) \rightarrow \mathrm{Spec}(R)$. For each integer $l \geq 0$ set $$S(l):= \{\mathfrak{p} \in \mathrm{Spec}(R) \colon \dim \mathrm{Proj}(\mathcal{A} \otimes_{R} k(\mathfrak{p})) \geq l\}.$$
By Chevalley's \cite[Thm.\ 13.1.3 and Cor.\ 13.1.5]{Grothendieck1} $S(l)$ is closed in $\mathrm{Spec}(R)$. For $i=1, \ldots, r$ denote by $\mathfrak{m}_{i}$ the center of $V_i$ in $R$. Set $e:= \dim \mathrm{Proj}(\mathcal{A} \otimes_{R} \mathrm{Frac}(R))$. 
\begin{theorem}\label{Chevalley} Suppose $R$ is locally formally equidimensional. If $\mathrm{ht}(\mathfrak{m}_{i})>1$ for some $i$, then $\mathfrak{m}_{i}$ is a minimal prime of $S(\mathrm{ht}(\mathfrak{m}_{i})+e-1)$. 
\end{theorem}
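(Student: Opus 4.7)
My plan is to identify the prime of $\mathcal{A}$ corresponding to the center of $\mathcal{V}_i$, show it has height one in $\mathcal{A}$, and then read off the fiber dimension of $c$ over $\mathfrak{m}_i$ from the dimension formula. Let $\mathfrak{q}_i \subset \overline{\mathcal{A}}$ be the height-one homogeneous prime with $\mathcal{V}_i = \overline{\mathcal{A}}_{\mathfrak{q}_i}$, and set $\mathfrak{P}_i := \mathfrak{q}_i \cap \mathcal{A}$; this is a homogeneous prime of $\mathcal{A}$ lying in $\mathrm{Proj}(\mathcal{A})$ and mapping to $\mathfrak{m}_i$ under $c$. Because $R$ is locally formally equidimensional, it is universally catenary by Ratliff's theorem, so the dimension formula applies to every finitely generated $R$-algebra; by the definition of $e$ one has $\mathrm{tr.deg}_R \mathcal{A} = e+1$.

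The central step will be to establish the Abhyankar-type equality
$$
\mathrm{tr.deg}_{k(\mathfrak{m}_i)} k(\mathfrak{q}_i) = \mathrm{ht}(\mathfrak{m}_i) + e.
$$
I would derive this from the divisorial property granted by Thm.\ \ref{main}(iii): writing $\mathcal{V}_i = (R_i)_{\mathfrak{p}_i'}$ for a Noetherian subring $R_i \subset \overline{\mathcal{A}}$ with $\mathrm{ht}(\mathfrak{p}_i') = 1$ and $\mathrm{Frac}(R_i) = \mathrm{Frac}(\overline{\mathcal{A}})$, the dimension formula applied to a finitely generated $R$-subalgebra of $R_i$ still realising $\mathcal{V}_i$ as a localisation gives $\mathrm{ht}(\mathfrak{p}_i') + \mathrm{tr.deg}_{k(\mathfrak{m}_i)} k(\mathfrak{p}_i') = \mathrm{ht}(\mathfrak{m}_i) + (e+1)$, and $k(\mathfrak{p}_i') = k(\mathfrak{q}_i)$. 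Since $\mathcal{A} \subset \overline{\mathcal{A}}$ is integral, $k(\mathfrak{q}_i)/k(\mathfrak{P}_i)$ is algebraic, so the same transcendence degree holds for $k(\mathfrak{P}_i)$. Feeding this into the dimension formula for $\mathcal{A}/R$ at $\mathfrak{P}_i$ forces $\mathrm{ht}(\mathfrak{P}_i) = 1$.

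To conclude, the closed subscheme $\mathrm{Proj}((\mathcal{A}/\mathfrak{P}_i) \otimes_R k(\mathfrak{m}_i))$ of the fiber over $\mathfrak{m}_i$ is the Proj of a graded $k(\mathfrak{m}_i)$-domain with fraction field $k(\mathfrak{P}_i)$, so it has dimension $\mathrm{tr.deg}_{k(\mathfrak{m}_i)} k(\mathfrak{P}_i) - 1 = \mathrm{ht}(\mathfrak{m}_i) + e - 1$, placing $\mathfrak{m}_i$ in $S(\mathrm{ht}(\mathfrak{m}_i)+e-1)$. For minimality I take $\mathfrak{m}' \subsetneq \mathfrak{m}_i$, so $\mathrm{ht}(\mathfrak{m}') \leq \mathrm{ht}(\mathfrak{m}_i) - 1$. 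If $\mathfrak{m}' = 0$ the generic fiber has dimension $e$, which is strictly less than $\mathrm{ht}(\mathfrak{m}_i) + e - 1$ by the hypothesis $\mathrm{ht}(\mathfrak{m}_i) > 1$. Otherwise every homogeneous $\mathfrak{P} \in \mathrm{Proj}(\mathcal{A})$ with $\mathfrak{P} \cap R = \mathfrak{m}'$ has $\mathrm{ht}(\mathfrak{P}) \geq 1$, and the dimension formula bounds the corresponding fiber-component dimension by $\mathrm{ht}(\mathfrak{m}') + e - \mathrm{ht}(\mathfrak{P}) \leq \mathrm{ht}(\mathfrak{m}') + e - 1 < \mathrm{ht}(\mathfrak{m}_i) + e - 1$. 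Hence $\mathfrak{m}' \notin S(\mathrm{ht}(\mathfrak{m}_i)+e-1)$, proving minimality.

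The main difficulty I anticipate is the reduction from the abstract Noetherian subring $R_i$ supplied by Thm.\ \ref{main} to a finitely generated $R$-subalgebra on which the dimension formula can be invoked, while preserving $\mathrm{Frac}(\cdot) = \mathrm{Frac}(\overline{\mathcal{A}})$. If the construction of $R_i$ in the proof of Thm.\ \ref{main} does not make this reduction automatic, the alternative is to invoke Abhyankar's equality for divisorial valuations on universally catenary domains directly, treating $\mathcal{V}_i$ as an Abhyankar valuation of $\mathrm{Frac}(\overline{\mathcal{A}})$ with center $\mathfrak{m}_i$ in $R$.
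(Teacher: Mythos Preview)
Your argument is correct, but it takes a detour that the paper avoids. The paper does not route through the divisorial clause of Thm.~\ref{main}(iii) at all: it obtains $\mathrm{ht}(\mathfrak{P}_i)=1$ (your $\mathfrak{P}_i$ is the paper's $\mathfrak{p}_i=\mathfrak{q}_i\cap\mathcal{A}$) by a direct appeal to \cite[Thm.\ 1.1 (iii)]{Rangachev2}, which gives height one for the contraction of an associated prime of $\mathcal{B}/\overline{\mathcal{A}}$ to a locally formally equidimensional Noetherian ring. With $\mathrm{ht}(\mathfrak{p}_i)=1$ in hand, the paper computes the \emph{exact} fiber dimension: $\dim\mathrm{Proj}(\mathcal{A}_{\mathfrak{m}_i})=\mathrm{ht}(\mathfrak{m}_i)+e$ by the dimension formula, and since $\mathcal{A}_{\mathfrak{m}_i}$ is locally formally equidimensional with a height-one point $\mathfrak{p}_i$ over the closed point, \cite[Lem.\ B.4.2]{Huneke} yields $\dim\mathrm{Proj}(\mathcal{A}\otimes k(\mathfrak{m}_i))=\mathrm{ht}(\mathfrak{m}_i)+e-1$. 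Your lower bound via the irreducible closed subscheme $\mathrm{Proj}((\mathcal{A}/\mathfrak{P}_i)\otimes_R k(\mathfrak{m}_i))$ is enough for membership in $S(\mathrm{ht}(\mathfrak{m}_i)+e-1)$ and is a legitimate alternative; it trades the equidimensionality lemma for the elementary transcendence-degree description of $\dim\mathrm{Proj}$ of a graded domain.

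Your minimality argument is the paper's, phrased componentwise. The paper bounds the whole fiber at once by $\dim\mathrm{Proj}(\mathcal{A}\otimes k(\mathfrak{n}_i))\leq\dim\mathrm{Proj}(\mathcal{A}_{\mathfrak{n}_i})-1=\mathrm{ht}(\mathfrak{n}_i)+e-1$ and then forces $\mathrm{ht}(\mathfrak{n}_i)\geq\mathrm{ht}(\mathfrak{m}_i)$, hence $\mathfrak{n}_i=\mathfrak{m}_i$; this is slightly cleaner but equivalent to your prime-by-prime bound.

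Finally, your closing worry about reducing the abstract Noetherian subring $R_i$ to a finitely generated $R$-algebra dissolves on inspection of the proof of Thm.~\ref{main}: the subring in question is $\mathcal{A}'=\mathcal{A}[f_1,\ldots,f_k]$, module-finite over $\mathcal{A}$ and hence a finitely generated $R$-algebra, so the dimension formula applies to it directly. That said, this whole Abhyankar manoeuvre is unnecessary once one cites \cite[Thm.\ 1.1 (iii)]{Rangachev2} for $\mathrm{ht}(\mathfrak{p}_i)=1$, which is how the paper proceeds.
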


{\bf  Acknowledgements.} I thank Madhav Nori and Bernard Teissier for stimulating and helpful conversations. I was partially supported by the University of Chicago FACCTS grant ``Conormal and Arc Spaces in the Deformation Theory of Singularities.''

\section{Proofs}

The proof of Thm.\ \ref{main} is based on three key propositions. 

\begin{proposition}\label{Noether} Suppose $\mathcal{A} \subset \mathcal{B}$ are integral domains. Suppose $\mathcal{A}$ is Noetherian and $\mathcal{B}$ is a finitely generated $\mathcal{A}$-algebra. Then there exists $f \in \overline{\mathcal{A}}$ such that $\overline{\mathcal{A}}_f$ is Noetherian.
\end{proposition}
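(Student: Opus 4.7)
The plan is to bracket $\overline{\mathcal{A}}$ between two manageable rings: a finite integral extension $\mathcal{A}_1$ of $\mathcal{A}$ inside $\overline{\mathcal{A}}$ with the same fraction field as $\overline{\mathcal{A}}$, and $\mathcal{B}$ itself. After inverting an appropriate element, faithful flatness of $\mathcal{B}$ over $\mathcal{A}_1$ will force the integral closure to collapse onto $\mathcal{A}_1$.

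To construct $\mathcal{A}_1$, I would first argue that $L_0 := \mathrm{Frac}(\overline{\mathcal{A}})$ is a finite extension of $K := \mathrm{Frac}(\mathcal{A})$. Indeed, $\overline{\mathcal{A}}$ is integral over $\mathcal{A}$, so $L_0/K$ is algebraic; moreover $L_0 \subset L := \mathrm{Frac}(\mathcal{B})$, which is finitely generated over $K$ because $\mathcal{B}$ is a finitely generated $\mathcal{A}$-algebra. Any algebraic subextension of a finitely generated field extension is finite, so $[L_0 : K] < \infty$. Writing $L_0 = K(\theta_1, \ldots, \theta_r)$ with $\theta_i \in \overline{\mathcal{A}}$ (possible because every element of $L_0$ is a ratio of elements of $\overline{\mathcal{A}}$), I set $\mathcal{A}_1 := \mathcal{A}[\theta_1, \ldots, \theta_r]$. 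Then $\mathcal{A}_1$ is a finite $\mathcal{A}$-module, hence Noetherian, with $\mathrm{Frac}(\mathcal{A}_1) = L_0$ and $\mathcal{A}_1 \subseteq \overline{\mathcal{A}}$.

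Next I would apply generic Noether normalization to the finitely generated $\mathcal{A}_1$-algebra $\mathcal{B}$: there exist $y_1, \ldots, y_d \in \mathcal{B}$ algebraically independent over $L_0$ and a nonzero $a \in \mathcal{A}_1$ such that $\mathcal{B}_a$ is finite over $\mathcal{C} := \mathcal{A}_{1,a}[y_1, \ldots, y_d]$. By Grothendieck's generic freeness, after replacing $a$ with $a a'$ for a suitable nonzero $a' \in \mathcal{A}_1$, I may additionally assume that $\mathcal{B}_a$ is free, in particular flat, over $\mathcal{A}_{1,a}$. The composition $\mathrm{Spec}(\mathcal{B}_a) \to \mathrm{Spec}(\mathcal{C}) \to \mathrm{Spec}(\mathcal{A}_{1,a})$ is surjective: the second map is surjective because $\mathcal{C}$ is a polynomial ring over $\mathcal{A}_{1,a}$, and the first is surjective by going-up for the integral extension $\mathcal{C} \subseteq \mathcal{B}_a$. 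Hence $\mathcal{A}_{1,a} \to \mathcal{B}_a$ is faithfully flat. The classical fact that a faithfully flat extension of domains $A \hookrightarrow B$ satisfies $A = \mathrm{Frac}(A) \cap B$ then yields $\mathcal{A}_{1,a} = L_0 \cap \mathcal{B}_a$. On the other hand, any $x \in \overline{\mathcal{A}}_a \subseteq \mathcal{B}_a$ automatically lies in $\mathrm{Frac}(\overline{\mathcal{A}}_a) = L_0$, so $x \in L_0 \cap \mathcal{B}_a = \mathcal{A}_{1,a}$. Combined with the trivial inclusion $\mathcal{A}_{1,a} \subseteq \overline{\mathcal{A}}_a$, this gives $\overline{\mathcal{A}}_a = \mathcal{A}_{1,a}$, which is Noetherian; since $a \in \mathcal{A}_1 \subseteq \overline{\mathcal{A}}$, setting $f = a$ finishes the proof.

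The main technical point is combining the two generic constructions (Noether normalization and generic freeness) while keeping the localizing element inside $\mathcal{A}_1 \subseteq \overline{\mathcal{A}}$, and arranging them so that Spec-surjectivity (from the polynomial ring factorization and going-up) can be combined with flatness (from generic freeness) to yield faithful flatness. Once the faithfully flat structure is in place, the collapse of the integral closure onto $\mathcal{A}_1$ is immediate.
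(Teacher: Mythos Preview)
Your proof is correct and shares the same opening move as the paper: build a Noetherian ring $\mathcal{A}_1$ (the paper's $\mathcal{A}'$) finite over $\mathcal{A}$ with $\mathrm{Frac}(\mathcal{A}_1)=\mathrm{Frac}(\overline{\mathcal{A}})$, then find a nonzero element whose inversion collapses $\overline{\mathcal{A}}$ onto $\mathcal{A}_1$. The divergence is in how that element is produced. The paper invokes the finiteness of $\mathrm{Ass}_{\mathcal{A}_1}(\mathcal{B}/\mathcal{A}_1)$ from \cite{Rangachev2}, restricts to $\mathrm{Ass}_{\mathcal{A}_1}(\overline{\mathcal{A}}/\mathcal{A}_1)$, takes $f$ in every minimal associated prime, and concludes $(\overline{\mathcal{A}}/\mathcal{A}_1)_f=0$. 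You instead arrange, via generic freeness, that $\mathcal{B}_a$ is faithfully flat over $\mathcal{A}_{1,a}$ and then use the standard identity $\mathcal{A}_{1,a}=\mathrm{Frac}(\mathcal{A}_1)\cap \mathcal{B}_a$ to squeeze $\overline{\mathcal{A}}_a$ down to $\mathcal{A}_{1,a}$. Your route is more self-contained, using only textbook commutative algebra rather than the external associated-primes result; the paper's route is shorter once that result is granted and meshes with the associated-primes framework driving the rest of the article. One small simplification of your argument: the Noether normalization step is unnecessary, since a nonzero free module is automatically faithfully flat; generic freeness alone already yields faithful flatness of $\mathcal{B}_a$ over $\mathcal{A}_{1,a}$.
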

\begin{proof} Denote by $E$ the algebraic closure of $\mathrm{Frac}(\mathcal{A})$ in $\mathrm{Frac}(\mathcal{B})$. By Zariski's lemma $E$ is a finite field extension of $\mathrm{Frac}(\mathcal{A})$. Because $E=\mathrm{Frac}(\overline{\mathcal{A}})$, there exist $f_1, \ldots, f_k \in \overline{\mathcal{A}}$ such that $E=\mathrm{Frac}(\mathcal{A})(f_1, \ldots, f_k)$. Set $\mathcal{A}':=\mathcal{A}[f_1, \ldots, f_k]$. Then $\mathcal{A}'$ is Noetherian and  $\mathrm{Frac}(\mathcal{A}')=\mathrm{Frac}(\overline{\mathcal{A}})$. By \cite[Prp.\ 2.1]{Rangachev2} $\mathrm{Ass}_{\mathcal{A}'}(\mathcal{B}/\mathcal{A}')$ is finite. But $\mathrm{Ass}_{\mathcal{A}'}(\overline{\mathcal{A}}/\mathcal{A}') \subset \mathrm{Ass}_{\mathcal{A}'}(\mathcal{B}/\mathcal{A}')$. So $\mathrm{Ass}_{\mathcal{A}'}(\overline{\mathcal{A}}/\mathcal{A}')$ is finite, too. Select $f \in \mathcal{A}'$ from the intersection of all minimal primes in $\mathrm{Ass}_{\mathcal{A}'}(\overline{\mathcal{A}}/\mathcal{A}')$. Then $\mathcal{A}'_f=\overline{\mathcal{A}}_f$; hence $\overline{\mathcal{A}}_f$ is Noetherian.
\end{proof}

The next proposition strengthens  \cite[Thm.\ 1.1 \rm{(ii)}]{Rangachev2} in the domain case. 
\begin{proposition}\label{finite} Suppose $\mathcal{A} \subset \mathcal{B}$ are integral domains. Suppose $\mathcal{A}$ is Noetherian and $\mathcal{B}$ is a finitely generated $\mathcal{A}$-algebra. Then $\mathrm{Ass}_{\overline{\mathcal{A}}}(\mathcal{B}/\overline{\mathcal{A}})$ and  $\mathrm{Ass}_{\mathcal{A}}(\mathcal{B}/\overline{\mathcal{A}})$ are finite. 
\end{proposition}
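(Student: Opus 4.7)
The plan is to apply Proposition \ref{Noether} to reduce to a Noetherian situation and then invoke the results of \cite{Rangachev2}. By Proposition \ref{Noether} there exists $f \in \overline{\mathcal{A}}$ with $\overline{\mathcal{A}}_f$ Noetherian; inspection of its proof also yields a module-finite (hence Noetherian) $\mathcal{A}$-subalgebra $\mathcal{A}' = \mathcal{A}[f_1,\dots,f_k] \subset \overline{\mathcal{A}}$ satisfying $\mathcal{A}'_f = \overline{\mathcal{A}}_f$. I would partition $\mathrm{Ass}_{\overline{\mathcal{A}}}(\mathcal{B}/\overline{\mathcal{A}})$ according to whether the prime contains $f$.

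For a prime $\mathfrak{P}$ not containing $f$, localization at $f$ yields a bijection with $\mathrm{Ass}_{\overline{\mathcal{A}}_f}(\mathcal{B}_f/\overline{\mathcal{A}}_f)$. This set is finite by \cite[Prp.\ 2.1]{Rangachev2} applied to the inclusion of Noetherian domains $\overline{\mathcal{A}}_f \subset \mathcal{B}_f$, since $\mathcal{B}_f$ is a finitely generated $\overline{\mathcal{A}}_f$-algebra.

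For a prime $\mathfrak{P}$ containing $f$, \cite[Thm.\ 1.1(i)]{Rangachev2} produces a DVR $V_\mathfrak{P} := \overline{\mathcal{A}}_\mathfrak{P}$ of $K := \mathrm{Frac}(\overline{\mathcal{A}}) = \mathrm{Frac}(\mathcal{A}')$ with $v_\mathfrak{P}(f) > 0$; distinct $\mathfrak{P}$ yield distinct $V_\mathfrak{P}$ because $\mathfrak{P}$ is recovered as the contraction of the maximal ideal of $V_\mathfrak{P}$ to $\overline{\mathcal{A}}$. Since $V_\mathfrak{P}$ is integrally closed and contains $\mathcal{A}'$, it must contain the integral closure $\widetilde{\mathcal{A}'}$ of $\mathcal{A}'$ in $K$, which is a Krull domain by the Mori--Nagata theorem. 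Only finitely many height-one primes $\widetilde{\mathfrak{p}}$ of the Krull domain $\widetilde{\mathcal{A}'}$ contain the nonzero element $f$, and at each such $\widetilde{\mathfrak{p}}$ the localization $\widetilde{\mathcal{A}'}_{\widetilde{\mathfrak{p}}}$ is a DVR of $K$. A domination argument --- using going-down applied to an appropriate integrally closed intermediate ring together with the fact that two DVRs of the same fraction field with one dominating the other must coincide --- identifies each $V_\mathfrak{P}$ with one of these $\widetilde{\mathcal{A}'}_{\widetilde{\mathfrak{p}}}$, giving the required finiteness.

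Finally, for $\mathrm{Ass}_{\mathcal{A}}(\mathcal{B}/\overline{\mathcal{A}})$, the classical result of Bourbaki (Commutative Algebra IV.1, Proposition 7), applicable because $\mathcal{A}'$ is Noetherian, yields $\mathrm{Ass}_{\mathcal{A}}(M) = \{\mathfrak{q} \cap \mathcal{A} : \mathfrak{q} \in \mathrm{Ass}_{\mathcal{A}'}(M)\}$ for the $\mathcal{A}'$-module $M = \mathcal{B}/\overline{\mathcal{A}}$; it therefore suffices to show $\mathrm{Ass}_{\mathcal{A}'}(\mathcal{B}/\overline{\mathcal{A}})$ is finite. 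The same $f$-partition applies: primes of $\mathcal{A}'$ not containing $f$ are handled by \cite[Prp.\ 2.1]{Rangachev2} on $\mathcal{A}'_f \subset \mathcal{B}_f$, and primes containing $f$ are shown to be contractions from the now-finite set $\mathrm{Ass}_{\overline{\mathcal{A}}}(\mathcal{B}/\overline{\mathcal{A}})$ via lying-over on the integral extension $\mathcal{A}'/\mathfrak{q} \hookrightarrow \overline{\mathcal{A}}/I$, with $I = \mathrm{ann}_{\overline{\mathcal{A}}}(\bar y)$ for a witness $\bar y$. The principal obstacle will be making the height-one claim for the center of $V_\mathfrak{P}$ on $\widetilde{\mathcal{A}'}$ airtight, given that $\overline{\mathcal{A}}$ need not be integrally closed in $K$ and standard going-down therefore does not apply directly to $\overline{\mathcal{A}} \subset \widetilde{\mathcal{A}'}$; I expect to exploit the DVR structure from \cite[Thm.\ 1.1(i)]{Rangachev2} to supply the missing rigidity.
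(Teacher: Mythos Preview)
Your approach for $\mathrm{Ass}_{\overline{\mathcal{A}}}(\mathcal{B}/\overline{\mathcal{A}})$ is exactly the paper's: partition on $f$, invoke \cite[Prp.\ 2.1]{Rangachev2} when $f\notin\mathfrak{P}$, and for $f\in\mathfrak{P}$ pass to the Mori--Nagata Krull domain and count height-one primes over $f$. The ``principal obstacle'' you flag dissolves once you notice that your $\widetilde{\mathcal{A}'}$ is the same ring as the integral closure $L$ of $\overline{\mathcal{A}}$ in $K$, so $L$ is \emph{integral over} $\overline{\mathcal{A}}$. The paper therefore uses lying-over, not going-down: pick any $\mathfrak{q}'\subset L$ with $\mathfrak{q}'\cap\overline{\mathcal{A}}=\mathfrak{P}$; then $\overline{\mathcal{A}}_{\mathfrak{P}}\subset L_{\mathfrak{q}'}\subsetneq K$, and since a DVR is a maximal proper subring of its fraction field this forces $\overline{\mathcal{A}}_{\mathfrak{P}}=L_{\mathfrak{q}'}$, hence $\mathrm{ht}(\mathfrak{q}')=1$. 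No intermediate ring or going-down is needed.

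For $\mathrm{Ass}_{\mathcal{A}}(\mathcal{B}/\overline{\mathcal{A}})$ your route through $\mathcal{A}'$ and Bourbaki differs from the paper, which instead quotes the proof of \cite[Thm.\ 1.1 (ii)]{Rangachev2} to place any $\mathfrak{p}$ with $f\in\mathfrak{p}$ inside $\mathrm{Ass}_{\mathcal{A}}(\mathcal{A}/f\mathcal{A})$. Your alternative is viable, but the last step is not complete as written: lying-over on $\mathcal{A}'/\mathfrak{q}\hookrightarrow\overline{\mathcal{A}}/I$ produces a prime $\mathfrak{P}\supset I$ contracting to $\mathfrak{q}$, yet gives no reason why $\mathfrak{P}\in\mathrm{Ass}_{\overline{\mathcal{A}}}(\mathcal{B}/\overline{\mathcal{A}})$. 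What you need is that the \emph{minimal} primes of $I$ lie in $\mathrm{Ass}_{\overline{\mathcal{A}}}(\mathcal{B}/\overline{\mathcal{A}})$ and contract to $\mathfrak{q}$ (the latter because minimal primes of an integral extension of a domain contract to zero); this is precisely Proposition~\ref{minimal}, which is proved independently and which you should invoke here.
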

\begin{proof} By Prp.\ \ref{Noether} there exists $f \in \overline{\mathcal{A}}$ such that $\overline{\mathcal{A}}_f$ is Noetherian. Let $\mathfrak{q} \in \mathrm{Ass}_{\overline{\mathcal{A}}}(\mathcal{B}/\overline{\mathcal{A}})$. If $f \not \in \mathfrak{q}$, then $\mathfrak{q} \in \mathrm{Ass}_{\overline{\mathcal{A}}_f}(\mathcal{B}_f/\overline{\mathcal{A}}_f)$. The last set is finite by \cite[Prp.\ 2.1]{Rangachev2}. Suppose $f \in \mathfrak{q}$. As before, denote by $E$ the algebraic closure of $\mathrm{Frac}(\mathcal{A})$ in $\mathrm{Frac}(\mathcal{B})$. It's a finite field extension of $\mathrm{Frac}(\mathcal{A})$. Denote by $L$ the integral closure of $\mathcal{A}$ in $E$. By the Mori--Nagata Theorem $L$ is a Krull domain (\cite[Prp.\ 12, pg.\ 209]{Bourbaki} and \cite[Ex.\ 4.15]{Huneke}). But $L$ is also the integral closure of $\overline{\mathcal{A}}$ in its field of fractions. Let $\mathfrak{q}'$ be a prime in $L$ that contracts to $\mathfrak{q}$. We have $\overline{\mathcal{A}}_{\mathfrak{q}} \subset L_{\mathfrak{q}'}$. By \cite[Thm.\ 1.1 \rm{(i)}]{Rangachev2} $\overline{\mathcal{A}}_{\mathfrak{q}}$ is a DVR. As $\overline{\mathcal{A}}$ and $L$ have the same field of fractions, $\overline{\mathcal{A}}_{\mathfrak{q}} = L_{\mathfrak{q}'}$. Thus $\mathrm{ht}(\mathfrak{q}')=1$. Because $L$ is a Krull domain, there are finitely many height one prime ideals in $L$ containing $f$. Thus there are finitely many $\mathfrak{q} \in \mathrm{Ass}_{\overline{\mathcal{A}}}(\mathcal{B}/\overline{\mathcal{A}})$ containing $f$. This proves the finiteness of $\mathrm{Ass}_{\overline{\mathcal{A}}}(\mathcal{B}/\overline{\mathcal{A}})$. Alternatively, apply directly \cite[Thm.\ 1.1 \rm{(ii)}]{Rangachev2} for $\mathcal{A}'$ and $\mathcal{B}$ noting that $\mathcal{A}$ and $\mathcal{A}'$ have the same integral closure in $\mathcal{B}$.

Let $\mathfrak{p} \in \mathrm{Ass}_{\mathcal{A}}(\mathcal{B}/\overline{\mathcal{A}})$. If $f \not \in \mathfrak{p}$, then $\mathfrak{p}$ is a contraction from a prime in  $\mathrm{Ass}_{\overline{\mathcal{A}}_f}(\mathcal{B}_f/\overline{\mathcal{A}}_f)$ which is finite by \cite[Prp.\ 2.1]{Rangachev2}.
If $f \in \mathfrak{p}$, then the proof of \cite[Thm.\ 1.1 \rm{(ii)}]{Rangachev2} shows that  $\mathfrak{p} \in \mathrm{Ass}_{\mathcal{A}}(\mathcal{A}/f\mathcal{A})$ which is finite because $\mathcal{A}$ is Noetherian. The proof is now complete. 
\end{proof}
%\begin{remark} \rm Here we give a more detailed explanation why the hypothesis listed in the Introduction guarantee that $\overline{\mathcal{A}}$ is generically Noetherian. Suppose $\mathcal{A}_{(0)}=\overline{\mathcal{A}}_{(0)}$. In the proof of \cite[Thm.\ 1.1 \rm{(ii)}]{Rangachev2} we show the existence of $f \in \mathcal{A}$ such that $\mathcal{A}_f=\overline{\mathcal{A}}_f$. Then $\overline{\mathcal{A}}_f$ is Noetherian because $\mathcal{A}$ is Noetherian. Suppose there exits $f \in \mathcal{A}$ such that $\mathcal{A}_f$ is normal and $\mathrm{char}(\mathrm{Frac}(\mathcal{A}))=0$. Then by (\cite[\href{http://stacks.math.columbia.edu/tag/032M}{Tag 032M}]{Stacks} $L_f$ is module-finite over $\mathcal{A}_f$. Thus $\overline{\mathcal{A}}_f$ is module-finite over $\mathcal{A}_f$; hence it's Noetherian. Next, suppose there exists prime $\mathfrak{q}$ in $\mathcal{B}$, $\mathfrak{p}:=\mathcal{A} \cap \mathfrak{q}$, such that $\mathcal{B}_{\mathfrak{q}}/\mathfrak{p}\mathcal{B}_{\mathfrak{q}}$ is module-finite over the residue field $\kappa(\mathfrak{p})$. Then by Zariski's main theorem (\cite[\href{http://stacks.math.columbia.edu/tag/00Q9}{Tag 00Q9}]{Stacks} or \cite[Ex.\ 4.26]{Huneke}) there exists $f \in \overline{\mathcal{A}}$ such that $\overline{\mathcal{A}}_f=\mathcal{B}_f$; in particular, $\overline{\mathcal{A}}_f$ is Noetherian. 
%\end{remark}

\begin{proposition}\label{minimal}
Suppose $\mathcal{A} \subset \mathcal{B}$ are integral domains. Suppose $\mathcal{A}$ is Noetherian and $\mathcal{B}$ is a finitely generated $\mathcal{A}$-algebra. Let $b \in \mathcal{B}$ be such that $J:= (\overline{\mathcal{A}} : _{\overline{\mathcal{A}}}  b)$ is a nonunit ideal in $\overline{\mathcal{A}}$. Then the minimal primes of $J$ are in $\mathrm{Ass}_{\overline{\mathcal{A}}}(\mathcal{B}/\overline{\mathcal{A}})$.
\end{proposition}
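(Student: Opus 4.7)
The plan is to use Propositions~\ref{Noether} and~\ref{finite} to reduce the problem to the Noetherian localization $\overline{\mathcal{A}}_f$, where standard primary-decomposition arguments apply, and then descend back to $\overline{\mathcal{A}}$. Let $\mathfrak{p}$ be a minimal prime of $J$, and choose $f\in\overline{\mathcal{A}}$ as in Proposition~\ref{Noether}, so $\overline{\mathcal{A}}_f$ is Noetherian. I treat two cases depending on whether $f\in\mathfrak{p}$, following the structure of the proof of Proposition~\ref{finite}.

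Suppose first that $f\notin\mathfrak{p}$. Then $\overline{\mathcal{A}}_\mathfrak{p}$ is a Noetherian local domain (as a localization of $\overline{\mathcal{A}}_f$), and $\mathfrak{p}\overline{\mathcal{A}}_\mathfrak{p}$ is the unique minimal prime over $J\overline{\mathcal{A}}_\mathfrak{p}=\mathrm{ann}_{\overline{\mathcal{A}}_\mathfrak{p}}(\overline{b}/1)$. Standard Noetherian primary decomposition therefore places $\mathfrak{p}\overline{\mathcal{A}}_\mathfrak{p}$ in $\mathrm{Ass}_{\overline{\mathcal{A}}_\mathfrak{p}}(\overline{\mathcal{A}}_\mathfrak{p}\cdot\overline{b}/1)\subset\mathrm{Ass}_{\overline{\mathcal{A}}_\mathfrak{p}}(\mathcal{B}_\mathfrak{p}/\overline{\mathcal{A}}_\mathfrak{p})$, so there exists $c\in\mathcal{B}$ (of the form $sb$ for some $s\in\overline{\mathcal{A}}\setminus\mathfrak{p}$) with $\mathrm{ann}_{\overline{\mathcal{A}}_\mathfrak{p}}(\overline{c}/1)=\mathfrak{p}\overline{\mathcal{A}}_\mathfrak{p}$. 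To descend to $\overline{\mathcal{A}}$, choose generators $y_1,\dots,y_n\in\mathfrak{p}$ of $\mathfrak{p}\overline{\mathcal{A}}_f$ (finite since $\overline{\mathcal{A}}_f$ is Noetherian); for each $i$ there is $m_i$ with $f^{m_i}y_ic\in\overline{\mathcal{A}}$, and setting $c':=f^Mc$ with $M=\max m_i$ gives $(y_1,\dots,y_n)\subset\mathrm{ann}_{\overline{\mathcal{A}}}(\overline{c'})\subset\mathfrak{p}$ (the upper bound by a direct localization check). An iteration that further enlarges $M$, controlled by the fact that $\mathfrak{p}/(y_1,\dots,y_n)$ is $f$-power torsion of bounded exponent (a consequence of the finite generation of $\mathfrak{p}\overline{\mathcal{A}}_f$), upgrades the lower inclusion to equality $\mathrm{ann}_{\overline{\mathcal{A}}}(\overline{c'})=\mathfrak{p}$, yielding $\mathfrak{p}\in\mathrm{Ass}_{\overline{\mathcal{A}}}(\mathcal{B}/\overline{\mathcal{A}})$.

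Suppose now that $f\in\mathfrak{p}$. I follow the Krull-domain argument of the second half of the proof of Proposition~\ref{finite}. Let $L$ be the integral closure of $\overline{\mathcal{A}}$ in its fraction field, which is a Krull domain by Mori--Nagata, and pick a prime $\mathfrak{p}'\subset L$ over $\mathfrak{p}$. The goal is to show $\mathrm{ht}(\mathfrak{p}')=1$, so that $L_{\mathfrak{p}'}$ is a DVR; the containment $\overline{\mathcal{A}}_\mathfrak{p}\subset L_{\mathfrak{p}'}$ combined with the maximality of a DVR among local subrings of its fraction field then forces $\overline{\mathcal{A}}_\mathfrak{p}=L_{\mathfrak{p}'}$ to itself be a DVR. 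With $\mathfrak{p}\overline{\mathcal{A}}_\mathfrak{p}$ principal, the descent becomes a one-generator version of the argument above and goes through cleanly. The main obstacle throughout is this descent step: translating the associated-prime statement from the Noetherian $\overline{\mathcal{A}}_\mathfrak{p}$ into an equality of ideals in the non-Noetherian $\overline{\mathcal{A}}$, which requires producing an actual element $c'\in\mathcal{B}$ with $\mathrm{ann}_{\overline{\mathcal{A}}}(\overline{c'})$ equal to $\mathfrak{p}$ rather than merely having the correct extension to $\overline{\mathcal{A}}_\mathfrak{p}$. The key technical input is the finite generation of $\mathfrak{p}\overline{\mathcal{A}}_\mathfrak{p}$ together with the uniform $f$-torsion control described above.
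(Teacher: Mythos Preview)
Your descent step is the real problem, and it does not go through in either case. In the case $f\notin\mathfrak{p}$, the claim that $\mathfrak{p}/(y_1,\dots,y_n)$ is $f$-power torsion of \emph{bounded} exponent is not a consequence of the finite generation of $\mathfrak{p}\overline{\mathcal{A}}_f$: for each $x\in\mathfrak{p}$ you obtain some $N_x$ with $f^{N_x}x\in(y_1,\dots,y_n)$, but there is nothing preventing the $N_x$ from being unbounded when $\mathfrak{p}$ itself is not finitely generated in the non-Noetherian ring $\overline{\mathcal{A}}$. Without such a bound you cannot upgrade $(y_1,\dots,y_n)\subset\mathrm{ann}_{\overline{\mathcal{A}}}(\overline{c'})\subset\mathfrak{p}$ to an equality. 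The case $f\in\mathfrak{p}$ is worse: your appeal to the argument of Proposition~\ref{finite} to get $\mathrm{ht}(\mathfrak{p}')=1$ is circular, since there the height computation rested on already knowing $\overline{\mathcal{A}}_{\mathfrak{q}}$ is a DVR via \cite[Thm.\ 1.1 (i)]{Rangachev2}, which applies to \emph{associated} primes --- precisely what you are trying to establish. And even granting that $\overline{\mathcal{A}}_{\mathfrak{p}}$ is a DVR, the ``one-generator'' descent still needs $\mathfrak{p}c'\subset\overline{\mathcal{A}}$ for all of $\mathfrak{p}$, not just for a uniformizer, and you no longer have powers of $f$ available to clear denominators since $f\in\mathfrak{p}$.

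The paper avoids these difficulties by working with all the minimal primes of $J$ at once rather than one at a time. First, using the Krull domain $L$, it shows that $J$ has finitely many minimal primes $\mathfrak{q}_1,\dots,\mathfrak{q}_l$, each of height one. Next, it obtains $\mathfrak{q}_i^{s_i}\subset J\overline{\mathcal{A}}_{\mathfrak{q}_i}$ not via $\overline{\mathcal{A}}_f$ but by completing $\mathcal{A}$ at $\mathfrak{p}_i=\mathfrak{q}_i\cap\mathcal{A}$, where the base change of $\overline{\mathcal{A}}$ becomes module-finite (hence Noetherian), and then descending through \cite[Prp.\ 2.2]{Rangachev2}. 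Finally --- and this is the key device you are missing --- for fixed $j$ one takes $c=c_1\cdots c_l$ with $c_i\in\mathfrak{q}_i^{s_i}\setminus\mathfrak{q}_j$ for $i\neq j$ and $c_j\in\mathfrak{q}_j^{s_j-1}\setminus J\overline{\mathcal{A}}_{\mathfrak{q}_j}$; a direct check then gives $(\overline{\mathcal{A}}:_{\overline{\mathcal{A}}}cb)=\mathfrak{q}_j$ on the nose in $\overline{\mathcal{A}}$. This global prime-avoidance construction manufactures an element whose annihilator is exactly $\mathfrak{q}_j$, so no bounded-torsion or localization-to-global descent is ever needed.
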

\begin{proof}
If $J=(0)$, then clearly $J \in \mathrm{Ass}_{\overline{\mathcal{A}}}(\mathcal{B}/\overline{\mathcal{A}})$. Suppose $J \neq (0)$. Select a nonzero $h \in J$. Then $J:= ((h) : _{\overline{\mathcal{A}}}  hb)$. Thus the minimal primes of $J$ are among the minimal primes of $(h)$ each of which is of height one. Denote by $L$ the integral closure of $\overline{\mathcal{A}}$ in $\mathrm{Frac}(\overline{\mathcal{A}})$. Because $L$ is a Krull domain, then there are finitely many minimal primes of $hL$. But $L$ is integral over $\overline{\mathcal{A}}$. So by incomparability each minimal prime of $(h)$
is a contraction of a prime of height one in $L$ which has to be a minimal prime of $hL$. Therefore, $(h)$ has finitely many minimal primes, and so does $J$.

Denote by $\mathfrak{q}_1, \ldots, \mathfrak{q}_l$ the minimal primes of $J$. First, we want to show that for each $1 \leq i \leq l$ there exists a positive integer $s_i$ such that $\mathfrak{q}_{i}^{s_i} \subset J\overline{\mathcal{A}}_{\mathfrak{q}_i}$. We proceed as in the proof of \cite[Thm.\ 1.1 \rm{(i)}]{Rangachev2}. Set $\mathfrak{p}_i:=\mathfrak{q}_i \cap \mathcal{A}$. We can assume that $\mathcal{A}$ is local at $\mathfrak{p}_i$. Let $\widehat{\mathcal{A}}$ be the completion of $\mathcal{A}$ with respect to $\mathfrak{p}_i$. Set $\mathcal{A}':=\overline{\mathcal{A}} \otimes_{\mathcal{A}} \widehat{\mathcal{A}}$ and $\mathcal{B}':= \mathcal{B} \otimes_{\mathcal{A}} \widehat{\mathcal{A}}$. 
Replace $\widehat{\mathcal{A}}, \mathcal{A}'$ and $\mathcal{B}'$ by their reduced structures.
Because $\widehat{\mathcal{A}}$ is a reduced complete local ring and $\mathcal{B}'$ is a finitely generated $\widehat{\mathcal{A}}$-algebra, then 
by \cite[\href{http://stacks.math.columbia.edu/tag/03GH}{Tag 03GH}]{Stacks} $\mathcal{A}'$ is module-finite over $\widehat{\mathcal{A}}$. In particular, $\mathcal{A}'$ is Noetherian. Clearly, $J\mathcal{A}'$ is primary to $\mathfrak{q}_{i}\mathcal{A}'$. Thus there exists $s_i$ such that $\mathfrak{q}_{i}^{s_i}\mathcal{A}' \subset J\mathcal{A}'$. Hence $\mathfrak{q}_{i}^{s_i}b \in \mathcal{A}'$. But $\mathfrak{q}_{i}^{s_i}b \in \mathcal{B}$. Thus by \cite[Prp.\ 2.2]{Rangachev2} $\mathfrak{q}_{i}^{s_i}b \in \overline{\mathcal{A}}_{\mathfrak{p}_i}$, and so  $\mathfrak{q}_{i}^{s_i}b \in \overline{\mathcal{A}}_{\mathfrak{q}_i}$. This implies $\mathfrak{q}_{i}^{s_i} \subset J\overline{\mathcal{A}}_{\mathfrak{q}_i}$ by \cite[Prp.\ 3.14]{Atiyah} applied for $(\overline{\mathcal{A}},b)/\overline{\mathcal{A}}$.

Assume that the $s_i$ defined above are the minimal possible. Fix $1 \leq j \leq l$. For each $i \neq j$ by prime avoidance we can select $c_i \in \mathfrak{q}_{i}^{s_i}$ and $c_i \not \in \mathfrak{q}_j$. Let $c_j \in \mathfrak{q}_{j}^{s_{j}-1}$ with $c_j \not \in J\overline{\mathcal{A}}_{\mathfrak{q}_j}$. Set $c:=c_1 \cdots c_l$. Then $\mathfrak{q}_j = (\overline{\mathcal{A}} : _{\overline{\mathcal{A}}}  cb)$ and thus $\mathfrak{q}_j \in \mathrm{Ass}_{\overline{\mathcal{A}}}(\mathcal{B}/\overline{\mathcal{A}})$.
\end{proof}
\vspace{.1cm}
\begin{center}
{\it Proof of Theorem \ref{main}}
\end{center}
\vspace{.2cm}
We can proceed with the proof of Thm.\ \ref{main}. Suppose  $\overline{\mathcal{A}} \neq \mathcal{B}$ and $\mathrm{Ass}_{\overline{\mathcal{A}}}(\mathcal{B}/\overline{\mathcal{A}}) \neq \{(0)\}$. Then by Prp.\ \ref{finite} $\mathrm{Ass}_{\overline{\mathcal{A}}}(\mathcal{B}/\overline{\mathcal{A}})$ contains finitely many nonzero prime ideals which we denote by $\mathfrak{q}_1, \ldots, \mathfrak{q}_r$. By \cite[Thm.\ 1.1 \rm{(i)}]{Rangachev2} $\mathcal{V}_i:=\overline{\mathcal{A}}_{\mathfrak{q}_i}$ is a DVR for each $i=1, \ldots, r$.  Obviously, $\overline{\mathcal{A}} \subseteq \cap_{i=1}^{r} \mathcal{V}_i \cap \mathcal{B}.$ Let $b=x/y \in  \cap_{i=1}^{r} \mathcal{V}_i \cap \mathcal{B}.$ Set $J:= (y\overline{\mathcal{A}} : _{\overline{\mathcal{A}}} x)$. We have $J\mathcal{V}_i=\mathcal{V}_i$ for each $i$. Thus $J \nsubseteq \mathfrak{q}_i$ for each $i$. But $Jb \in \overline{\mathcal{A}}$ and $J \neq (0)$. So by Prp.\ \ref{minimal} if $J$ is a nonunit ideal, its minimal primes are among $\mathfrak{q}_1, \ldots, \mathfrak{q}_r$ which is impossible. Thus $J$ has to be the unit ideal, which implies that $b \in \overline{\mathcal{A}}$. 

To prove minimality of the valuation decomposition, suppose we can drop $\mathcal{V}_j$ in (\ref{decompos}) for some $j$. Then localizing both sides of (\ref{decompos}) at $\mathfrak{q}_j$ we obtain that $\overline{\mathcal{A}}_{\mathfrak{q}_j}=\mathcal{B}_{\mathfrak{q}_j}$ which contradicts with $\mathfrak{q}_j \in \mathrm{Supp}(\mathcal{B}/\overline{\mathcal{A}})$. Thus (\ref{decompos}) is minimal. 
We are left with proving the uniqueness of the $\mathcal{V}_i$s. Suppose 
\begin{equation}\label{2decompos}
\overline{\mathcal{A}}= \cap_{j=1}^s \mathcal{V}_{j}' \cap \mathcal{B}
\end{equation}
is a minimal discrete valuation decomposition. Set $\mathcal{B}':=\mathrm{Frac}(\overline{\mathcal{A}}) \cap \mathcal{B}$. We have $\overline{\mathcal{A}}_{\mathfrak{q}_i} \neq \mathcal{B}_{\mathfrak{q}_i}'$ for each $i=1, \ldots, r$. As the intersection in (\ref{2decompos}) takes place in $\mathcal{B}'$ we can replace in it $\mathcal{B}$ by $\mathcal{B}'$. Localizing both sides of (\ref{2decompos}) at $\mathfrak{q}_1$ we obtain that there exists a valuation $\mathcal{V}_{l}'$ such that $(\mathcal{V}_{l}')_{\mathfrak{q}_1} \neq \mathrm{Frac}(\overline{\mathcal{A}})$. But $\mathcal{V}_1=\overline{\mathcal{A}}_{\mathfrak{q}_1} \subset (\mathcal{V}_{l}')_{\mathfrak{q}_1}$. Thus $\mathcal{V}_1=(\mathcal{V}_{l}')_{\mathfrak{q}_1}$. Also, $\mathcal{V}_{l}' \subset (\mathcal{V}_{l}')_{\mathfrak{q}_1}$, and so $\mathcal{V}_{l}' = (\mathcal{V}_{l}')_{\mathfrak{q}_1}$. Therefore, $\mathcal{V}_1=\mathcal{V}_{l}'$. Continuing this process we obtain that each $\mathcal{V}_i$ appears in (\ref{2decompos}). As (\ref{2decompos}) is minimal, we obtain that $s=r$ and after possibly renumbering we get $\mathcal{V}_{i}=\mathcal{V}_{i}'$ for $i=1, \ldots, r$. 

Let $\mathcal{A}'$ be the module-finite $\mathcal{A}$-algebra defined in the proof of of Prp.\ \ref{Noether}. Suppose $\mathcal{A}$ is locally formally equidimensional. Then so is $\mathcal{A}'$. Note that $\mathrm{Frac}(\mathcal{A}')=\mathrm{Frac}(\overline{\mathcal{A}})$. Denote by $\mathfrak{m}_{\mathcal{V}_i}$ the maximal ideal of $\mathcal{V}_i$. Set $\mathfrak{p}_{i}:=\mathfrak{m}_{\mathcal{V}_i}\cap \mathcal{A}'$. By Cohen's dimension inequality (see \cite[Thm.\ B.2.5]{Huneke})
$$\mathrm{tr.\ deg}_{\kappa(\mathfrak{p}_i)}\kappa(\mathfrak{m}_{\mathcal{V}_i}) \leq \mathrm{ht}(\mathfrak{p}_i)-1.$$
Because $\mathfrak{p}_i=\mathfrak{q}_i \cap \mathcal{A}'$, then by \cite[Thm.\ 1.1 \rm{(iii)}]{Rangachev2} we get $\mathrm{ht}(\mathfrak{p}_i)=1$. Therefore, $\mathrm{tr.\ deg}_{\kappa(\mathfrak{p}_i)}\kappa(\mathfrak{m}_{\mathcal{V}_i}) = \mathrm{ht}(\mathfrak{p}_i)-1=0$. Hence each $\mathcal{V}_i$ is a divisorial valuation ring in $\mathrm{Frac}(\overline{\mathcal{A}})$.
\qedsymbol
\begin{remark}
\rm{As it's well-known, an integrally closed domain equals the intersection of all valuation rings in its field of fractions that contain it. From here one derives set-theoretically that $\overline{\mathcal{A}}=\cap \mathcal{V} \cap \mathcal{B}$ where the intersection is taken over all valuation rings in $\mathrm{Frac}(\overline{\mathcal{A}})$ that contain the integral closure of $\overline{\mathcal{A}}$ in $\mathrm{Frac}(\overline{\mathcal{A}})$. 
Because $\mathcal{A}'$ and $\overline{\mathcal{A}}$ have the same integral closure and $\mathcal{A}'$ is Noetherian (see the proof of Prp.\ \ref{Noether}), then in the intersection we can take only DVRs. Thus, the real contribution of Thm.\ \ref{main} is that under the additional hypothesis that $\mathcal{B}$ is a finitely generated $\mathcal{A}$-algebra, one can take finitely many uniquely determined DVRs each of which is a localization of $\overline{\mathcal{A}}$ at a height one prime ideal.}
\end{remark}
\vspace{.1cm}
\begin{center}
{\it Proof of Cor.\ \ref{graded} and Cor.\ \ref{Rees}}
\end{center}
\vspace{.2cm}
Suppose  $\mathrm{Ass}_{\overline{\mathcal{A}}}(\mathcal{B}/\overline{\mathcal{A}}) \neq \{(0)\}$ and  $\overline{\mathcal{A}} \neq \mathcal{B}$. Because $\mathcal{A}V=\overline{\mathcal{A}}V$ and $\overline{\mathcal{A}} \subset \overline{\mathcal{A}}V$ we get $\overline{\mathcal{A}} \subset \mathcal{A}V$. Thus for each $i=1, \ldots, r$
$$\overline{\mathcal{A}} \subset \mathcal{A}V_i \subset \mathcal{V}_i.$$
But $\mathcal{A}V_i = \oplus_{j=0}^{\infty} \mathcal{A}_jV_i$. Also, by Thm.\ \ref{main} $\overline{\mathcal{A}}= \cap_{i=1}^r \mathcal{V}_i \cap \mathcal{B}$. Thus, set-theoretically $\overline{\mathcal{A}_n}= \cap_{i=1}^{r} \mathcal{A}_{n}V_{i} \cap \mathcal{B}_n$. Set $K:=\mathrm{Frac}(R)$. Suppose $V_i \neq K$ for each $i=1, \ldots, r$. Showing that the decomposition is minimal and unique 
is done in the same way as in Thm.\ \ref{main}. Here we will show just the uniqueness of the valuations. Suppose there exist DVRs $V_{1}', \ldots, V_{s}'$ in $K$ such that 
\begin{equation}\label{decos2}
\overline{\mathcal{A}}= \cap_{j=1}^{s} \mathcal{A}V_{j}' \cap \mathcal{B}
\end{equation}
is minimal. As in the proof of Thm.\ \ref{main} we can assume that $\mathrm{Frac}(\mathcal{B})=\mathrm{Frac}(\overline{\mathcal{A}})$. If $V_{j}'=K$ for some $j$, then because (\ref{decos2}) is minimal we get $s=1$ and $V_{1}'=K$. So $\overline{\mathcal{A}} =\mathcal{A}K \cap \mathcal{B}$. Localizing at $\mathfrak{q}_1$ we obtain that $\mathcal{V}_1=(\mathcal{A}K)_{\mathfrak{q}_1}$. But $K \subset (\mathcal{A}K)_{\mathfrak{q}_1}$. Thus $V_1 = K$, a contradiction. Therefore, $V_{j}' \neq K$ for each $j$. Again, by localizing (\ref{decos2}) at $\mathfrak{q}_i$, we get that there is a $j$ such that $\mathcal{V}_i= (\mathcal{A}V_{j}')_{\mathfrak{q}_i}$. But $V_i=\mathcal{V}_i \cap \mathrm{Frac}(R)$ and $V_{j}' \in (\mathcal{A}V_{j}')_{\mathfrak{q}_i} \cap \mathrm{Frac}(R)$. Because $V_i \neq \mathrm{Frac}(R)$, then $V_i=V_{j}'$. Thus $r=s$ by minimality and after possible renumbering $V_i=V_{i}'$ for each $i=1, \ldots, r$.

Consider Cor.\ \ref{Rees}. Apply Cor.\ \ref{graded} with $\mathcal{A}:=R[It]$ and $\mathcal{B}:=R[t]$. In the introduction we proved that $R[It]V=\overline{R[It]}V$ for each valuation $V$ in $K$. What remains to be shown is that $V_i \neq K$ for each $i$. Indeed, the prime ideals in $\overline{R[It]}$ are contractions of extensions of prime ideals of $R$ to $R[t]$. Thus $\mathrm{ht}(\mathfrak{q}_i \cap R) \geq 1$ and so $V_i \neq K$ for each $i$ otherwise $\mathfrak{q}_{i}\overline{R[It]}_{\mathfrak{q}_i}$ is a unit ideal which is impossible.
\qedsymbol

A version of Cor.\ \ref{graded} for Rees algebras of modules is proved by Rees in \cite[Thm.\ 1.7]{Rees87}.

\vspace{.1cm}
\begin{center}
{\it Proof of Theorem \ref{Chevalley}}
\end{center}
\vspace{.2cm}
First, we show that $\mathfrak{m}_{i} \in S(\mathrm{ht}(\mathfrak{m}_{i})+e-1)$. Recall that the maximal ideal of $\mathcal{V}_i$ contracts to $\mathfrak{q}_i$ in $\overline{\mathcal{A}}$. Set $\mathfrak{p}_{i}:= \mathfrak{q}_i \cap \mathcal{A}$. Then by \cite[Thm.\ 1.1 \rm{(iii)}]{Rangachev2} $\mathrm{ht}(\mathfrak{p}_i)=1$. Consider the map $\mathrm{Proj}(\mathcal{A}_{\mathfrak{m}_i}) \rightarrow \mathrm{Spec}(R_{\mathfrak{m}_{i}})$. It's closed, surjective and of finite type. By the dimension formula (\cite[\href{http://stacks.math.columbia.edu/tag/02JX}{Tag 02JX}]{Stacks}) $\dim \mathrm{Proj}( \mathcal{A}_{\mathfrak{m}_i})=\mathrm{ht}(\mathfrak{m}_{i})+e$. But $\mathcal{A}_{\mathfrak{m}_i}$ is a local formally equidimensional ring. Because $\mathrm{ht}(\mathfrak{p}_{i}\mathcal{A}_{\mathfrak{m}_i})=1$, by \cite[Lem.\ B.4.2]{Huneke} $\dim \mathrm{Proj}(\mathcal{A} \otimes k(\mathfrak{m}_i))=\mathrm{ht}(\mathfrak{m}_{i})+e-1$. Thus $\mathfrak{m}_{i} \in S(\mathrm{ht}(\mathfrak{m}_{i})+e-1)$.

Next, suppose there exists a prime $\mathfrak{n}_i$ in $R$ with $\mathfrak{n}_i \subset \mathfrak{m}_{i}$ and $\mathfrak{n}_i \in S(\mathrm{ht}(\mathfrak{m}_{i})+e-1)$. Then $\dim \mathrm{Proj}(\mathcal{A} \otimes k(\mathfrak{n}_i)) \geq \mathrm{ht}(\mathfrak{m}_{i})+e-1$. Note that $\mathfrak{n}_i \neq (0)$ for otherwise $\mathfrak{n}_i \in S(e)$ which forces $\mathrm{ht}(\mathfrak{m}_{i})=1$, a contradiction. Because $\dim \mathrm{Proj}( \mathcal{A}_{\mathfrak{n}_i})=\mathrm{ht}(\mathfrak{n}_i)+e$ then $\dim \mathrm{Proj}( \mathcal{A} \otimes k(\mathfrak{n}_{i})) \leq \mathrm{ht}(\mathfrak{n}_i)+e-1$. Therefore, $$ \mathrm{ht}(\mathfrak{m}_{i})+e-1 \leq \mathrm{ht}(\mathfrak{n}_{i})+e-1.$$
But $\mathfrak{n}_i \subset \mathfrak{m}_{i}$. Thus $\mathfrak{n}_i = \mathfrak{m}_{i}$ and $\mathfrak{m}_{i}$ is a minimal prime in $S(\mathrm{ht}(\mathfrak{m}_{i})+e-1)$. This completes the proof of Thm.\ \ref{Chevalley}. \qedsymbol

Thm.\ \ref{Chevalley} generalizes \cite[Thm.\ 7.8]{Rangachev}, which is a result for Rees algebras of modules. To see that Thm.\ \ref{Chevalley} is sharp, let $(R,\mathfrak{m})$ be a Noetherian regular local ring of dimension at least $2$, and let $h \in \mathfrak{m}$ be an irreducible element. Let $\mathcal{B}$ be the polynomial ring $R[y_1, \ldots, y_{e+1}]$ for some $e \geq 0$. Set $\mathcal{A}:=R[hy_1, \ldots, hy_{e+1}]$. Thus $\mathcal{A}$ is a polynomial subring of $\mathcal{B}$. It is normal because $R$ is regular. In the setup of Cor.\ \ref{graded} there is only one $\mathcal{V}_1=\mathcal{A}_{\mathfrak{q}_1}$
where $\mathfrak{q}_1=h\mathcal{A}$.  Note that $S(k)=S(e)$ for all $k \geq 0$ because $\mathcal{A}$ is a polynomial ring over $R$ generated by $e+1$ elements. Thus the only minimal prime in $S(e)$ is $(0)$, whereas $\mathfrak{m}_{V_i}=(h)$ is a height one prime ideal in $R$.

\vspace{.1cm}
\begin{center}
{\it A Variant of Zariski's Main Theorem }
\end{center}
\vspace{.2cm}
Let $\mathcal{A} \subset \mathcal{B}$ be Noetherian rings. Suppose $\mathcal{B}$ is a finitely generated $\mathcal{A}$-algebra. Denote by $\overline{\mathcal{A}}$ the integral closure of $\mathcal{A}$ in $\mathcal{B}$. Denote by $\mathcal{I}_{\mathcal{B}/\overline{\mathcal{A}}}$ the intersection of all elements in $\mathrm{Ass}_{\overline{\mathcal{A}}}(\mathcal{B}/\overline{\mathcal{A}})$. The following result characterizes the support of $\mathcal{B}/\overline{\mathcal{A}}$.

\begin{proposition}\label{support} Let $\mathcal{A} \subset \mathcal{B}$ be integral domains. Suppose $\mathcal{A}$ is Noetherian and $\mathcal{B}$ is a finitely generated $\mathcal{A}$-algebra. Then $\mathbb{V}(\mathcal{I}_{\mathcal{B}/\overline{\mathcal{A}}})=\mathrm{Supp}_{\overline{\mathcal{A}}}(\mathcal{B}/\overline{\mathcal{A}}).$
\end{proposition}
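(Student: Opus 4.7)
The plan is to prove the two set-theoretic inclusions separately, writing $\mathcal{I}$ for $\mathcal{I}_{\mathcal{B}/\overline{\mathcal{A}}}$ and leveraging Prp.~\ref{finite} (finiteness of $\mathrm{Ass}_{\overline{\mathcal{A}}}(\mathcal{B}/\overline{\mathcal{A}})$) together with Prp.~\ref{minimal}.

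For $\mathbb{V}(\mathcal{I}) \subseteq \mathrm{Supp}_{\overline{\mathcal{A}}}(\mathcal{B}/\overline{\mathcal{A}})$, Prp.~\ref{finite} lets me enumerate $\mathrm{Ass}_{\overline{\mathcal{A}}}(\mathcal{B}/\overline{\mathcal{A}}) = \{\mathfrak{q}_1,\dots,\mathfrak{q}_r\}$, so that $\mathcal{I} = \mathfrak{q}_1 \cap \cdots \cap \mathfrak{q}_r \supseteq \mathfrak{q}_1 \cdots \mathfrak{q}_r$. Any prime $\mathfrak{p} \supseteq \mathcal{I}$ therefore contains some $\mathfrak{q}_i$ by primality. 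Since the support is closed under specialization (from $\overline{\mathcal{A}} \setminus \mathfrak{p} \subseteq \overline{\mathcal{A}} \setminus \mathfrak{q}_i$ one sees that $(\mathcal{B}/\overline{\mathcal{A}})_\mathfrak{p} = 0$ would force $(\mathcal{B}/\overline{\mathcal{A}})_{\mathfrak{q}_i} = 0$) and $\mathfrak{q}_i \in \mathrm{Ass} \subseteq \mathrm{Supp}$, I conclude $\mathfrak{p} \in \mathrm{Supp}$.

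For the reverse inclusion, given $\mathfrak{p} \in \mathrm{Supp}$ I pick $b \in \mathcal{B}$ whose class in $\mathcal{B}/\overline{\mathcal{A}}$ survives localization at $\mathfrak{p}$. The colon ideal $J := (\overline{\mathcal{A}} :_{\overline{\mathcal{A}}} b)$ then cannot meet $\overline{\mathcal{A}} \setminus \mathfrak{p}$, so $J \subseteq \mathfrak{p}$ and in particular $J$ is a nonunit ideal. Prp.~\ref{minimal} applies, and its proof establishes moreover that $J$ has only finitely many minimal primes; the prime $\mathfrak{p}$ therefore contains the intersection of these and hence contains one of them, call it $\mathfrak{q}$. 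By Prp.~\ref{minimal} $\mathfrak{q} \in \mathrm{Ass}$, so $\mathcal{I} \subseteq \mathfrak{q} \subseteq \mathfrak{p}$, placing $\mathfrak{p}$ in $\mathbb{V}(\mathcal{I})$.

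The delicate point is that $\overline{\mathcal{A}}$ need not be Noetherian, so neither the passage from ``$\mathfrak{p} \supseteq J$'' to ``$\mathfrak{p}$ contains a minimal prime of $J$'' nor prime avoidance applied to $\mathrm{Ass}$ is available a priori. Both subtleties have already been resolved by the earlier propositions: Prp.~\ref{minimal} (via Mori--Nagata, applied to the Krull closure of $\overline{\mathcal{A}}$ in its fraction field) delivers the finiteness of minimal primes of $J$, and Prp.~\ref{finite} makes the intersection defining $\mathcal{I}$ finite. With those in hand the argument above is purely formal and no further input is needed.
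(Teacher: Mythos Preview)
Your proof is correct and follows essentially the same route as the paper's: both directions hinge on Prp.~\ref{minimal} for the inclusion $\mathrm{Supp}\subseteq\mathbb{V}(\mathcal{I})$ and on $\mathrm{Ass}\subseteq\mathrm{Supp}$ together with closure of $\mathrm{Supp}$ under specialization for the reverse. The only cosmetic difference is that the paper first disposes of the case $\mathrm{Frac}(\overline{\mathcal{A}})\neq\mathrm{Frac}(\mathcal{B})$ (where $(0)\in\mathrm{Ass}$ makes both sides all of $\mathrm{Spec}(\overline{\mathcal{A}})$), whereas you treat it uniformly; and your worry in the last paragraph is slightly overstated, since Zorn's lemma already guarantees, in any commutative ring, that a prime containing $J$ contains a prime minimal over $J$---finiteness is not needed for that step.
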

\begin{proof} If $\mathrm{Frac}(\overline{\mathcal{A}}) \neq  \mathrm{Frac}(\mathcal{B})$, then $(0) \in \mathrm{Ass}_{\overline{\mathcal{A}}}(\mathcal{B}/\overline{\mathcal{A}})$ and trivially $\mathbb{V}(\mathcal{I}_{\mathcal{B}/\overline{\mathcal{A}}})=\mathrm{Supp}_{\overline{\mathcal{A}}}(\mathcal{B}/\overline{\mathcal{A}})=\mathrm{Spec}(\overline{\mathcal{A}})$. Suppose $\mathrm{Frac}(\overline{\mathcal{A}}) =  \mathrm{Frac}(\mathcal{B})$. Then by \cite[Thm.\ 1.1 \rm{(i)}]{Rangachev2} $\mathcal{I}_{\mathcal{B}/\overline{\mathcal{A}}}=\mathfrak{q}_1 \cap \ldots \cap \mathfrak{q}_s$ with $\mathrm{ht}(\mathfrak{q}_i) = 1$ for each $i$. If $\mathfrak{q} \in \mathbb{V}(\mathcal{I}_{\mathcal{B}/\overline{\mathcal{A}}})$, then $\mathfrak{q}_j \subset \mathfrak{q}$ for some $j$ and thus $\mathfrak{q}_j\mathcal{A}_{\mathfrak{q}} \in \mathrm{Ass}_{\overline{\mathcal{A}}_{\mathfrak{q}}}(\mathcal{B}_{\mathfrak{q}}/\overline{\mathcal{A}}_{\mathfrak{q}})$. Hence $\mathbb{V}(\mathcal{I}_{\mathcal{B}/\overline{\mathcal{A}}}) \subset \mathrm{Supp}_{\overline{\mathcal{A}}}(\mathcal{B}/\overline{\mathcal{A}})$. Suppose $\mathfrak{q} \subset \mathrm{Supp}_{\overline{\mathcal{A}}}(\mathcal{B}/\overline{\mathcal{A}})$. Then there is $x/y \in \mathcal{B}$ with $x,y \in \overline{\mathcal{A}}$, such that its image in $\mathcal{B}_{\mathfrak{q}}$ is not $\overline{\mathcal{A}}_{\mathfrak{q}}$. In other words, if $J:=((x) \colon_{\overline{\mathcal{A}}}\ y)$, then $J \subset \mathfrak{q}$. But by Prp.\ \ref{minimal} the minimal primes of $J$ are among the $\mathfrak{q}_i$s. Thus there exists $\mathfrak{q}_j$ such that $\mathfrak{q}_j \subset \mathfrak{q}$, i.e. 
$\mathrm{Supp}_{\overline{\mathcal{A}}}(\mathcal{B}/\overline{\mathcal{A}}) \subset \mathbb{V}(\mathcal{I}_{\mathcal{B}/\overline{\mathcal{A}}})$.  
\end{proof}
In \cite[Cor.\ 4.4.9]{Grothendieck2} Grothendieck  derives the following result as a consequence of Zariski's main theorem (ZMT): if $g \colon X \rightarrow Y$ is a birational, proper morphism of noetherian integral schemes with $Y$ normal and $g^{-1}(y)$ finite for each $y \in Y$, then $g$ is an isomorphism. Below we show 
that in the affine case we can reach the same conclusion assuming that $g$ is surjective in codimension one. To do this we do not have to appeal to ZMT. In fact, our result proves ZMT in codimension one or in the special case when $\overline{\mathcal{A}}$ is a UFD (cf.\  \cite[Prp.\ 1, pg.\ 210]{Mumford} and the discussion that follows it) as shown below.

Denote by $g \colon \mathrm{Spec}(\mathcal{B}) \rightarrow \mathrm{Spec}(\overline{\mathcal{A}})$ the induced map on ring spectra. Denote by  $\mathbb{V}(I_g)$ the Zariski closure of $\mathrm{Im}(g) \cap \mathrm{Supp}_{\overline{\mathcal{A}}}(\mathcal{B}/\overline{\mathcal{A}})$. 

\begin{theorem}\label{baby} Let $\mathcal{A} \subset \mathcal{B}$ be integral domains. Suppose $\mathcal{A}$ is Noetherian and $\mathcal{B}$ is a finitely generated $\mathcal{A}$-algebra. Assume $\mathrm{Frac}(\overline{\mathcal{A}})=\mathrm{Frac}(\mathcal{B})$.
\begin{enumerate}
    \item[\rm{(i)}] If $g$ is surjective, then $\mathcal{B}=\overline{\mathcal{A}}$.
\item[\rm{(ii)}] If $\mathcal{B} \neq \overline{\mathcal{A}}$, then $\mathrm{ht}(I_g) \geq 2$. 
\end{enumerate}
\end{theorem}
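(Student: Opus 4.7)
My strategy is to reduce both parts to a single observation: none of the finitely many associated primes $\mathfrak{q}_1,\ldots,\mathfrak{q}_s$ of $\mathcal{B}/\overline{\mathcal{A}}$ over $\overline{\mathcal{A}}$ (finite by Prp.\ \ref{finite}, all of height one by Thm.\ \ref{main}) lies in $\mathrm{Im}(g)$. The main input is that each $\mathcal{V}_i:=\overline{\mathcal{A}}_{\mathfrak{q}_i}$ is a DVR, and a DVR is a maximal proper subring of its own fraction field (any overring containing $1/\pi^n$ contains $1/\pi$, hence all of the fraction field). Suppose $\mathfrak{p}\in\mathrm{Spec}(\mathcal{B})$ contracts to some $\mathfrak{q}_i$. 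Since $\mathrm{ht}(\mathfrak{q}_i)=1$, $\mathfrak{p}\neq(0)$, so $\mathcal{V}_i\subset\mathcal{B}_{\mathfrak{p}}\subsetneq\mathrm{Frac}(\mathcal{B})=\mathrm{Frac}(\mathcal{V}_i)$ forces $\mathcal{B}_{\mathfrak{p}}=\mathcal{V}_i$. Because $\overline{\mathcal{A}}\setminus\mathfrak{q}_i\subset\mathcal{B}\setminus\mathfrak{p}$, the canonical localization map squeezes $\mathcal{B}_{\mathfrak{q}_i}$ between $\overline{\mathcal{A}}_{\mathfrak{q}_i}$ and $\mathcal{B}_{\mathfrak{p}}=\mathcal{V}_i$, yielding $\mathcal{B}_{\mathfrak{q}_i}=\overline{\mathcal{A}}_{\mathfrak{q}_i}$, which contradicts $\mathfrak{q}_i\in\mathrm{Supp}_{\overline{\mathcal{A}}}(\mathcal{B}/\overline{\mathcal{A}})$.

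\textbf{Part (i).} If $g$ is surjective, every $\mathfrak{q}_i$ belongs to $\mathrm{Im}(g)$, which the observation rules out. Hence no such $\mathfrak{q}_i$ exists, so $\mathrm{Ass}_{\overline{\mathcal{A}}}(\mathcal{B}/\overline{\mathcal{A}})\subset\{(0)\}$. The hypothesis $\mathrm{Frac}(\overline{\mathcal{A}})=\mathrm{Frac}(\mathcal{B})$ gives every $b\in\mathcal{B}$ a nonzero $\overline{\mathcal{A}}$-denominator, so $(0)\notin\mathrm{Ass}_{\overline{\mathcal{A}}}(\mathcal{B}/\overline{\mathcal{A}})$. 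Consequently case (ii) of Thm.\ \ref{main} is excluded along with case (iii), leaving case (i), $\overline{\mathcal{A}}=\mathcal{B}$.

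\textbf{Part (ii).} Assume $\mathcal{B}\neq\overline{\mathcal{A}}$, so at least one $\mathfrak{q}_i$ exists. The squeeze argument forces the dichotomy $\mathcal{B}_{\mathfrak{q}_i}\in\{\mathcal{V}_i,\mathrm{Frac}(\mathcal{B})\}$; since $\mathfrak{q}_i\in\mathrm{Supp}(\mathcal{B}/\overline{\mathcal{A}})$, only the second option survives. Then $\mathfrak{q}_i\mathcal{B}_{\mathfrak{q}_i}$ is the unit ideal, so $\mathfrak{q}_i\mathcal{B}\cap\overline{\mathcal{A}}\supsetneq\mathfrak{q}_i$. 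By Prp.\ \ref{support}, $\mathrm{Supp}(\mathcal{B}/\overline{\mathcal{A}})=\bigcup_{i=1}^{s}\mathbb{V}(\mathfrak{q}_i)$, and every prime $\mathfrak{q}\in\mathrm{Im}(g)\cap\mathbb{V}(\mathfrak{q}_i)$ is the contraction of some $\mathfrak{p}\supset\mathfrak{q}_i\mathcal{B}$, so $\mathfrak{q}\supset\mathfrak{q}_i\mathcal{B}\cap\overline{\mathcal{A}}\supsetneq\mathfrak{q}_i$. Setting $J_i:=\bigcap_{\mathfrak{q}\in\mathrm{Im}(g)\cap\mathbb{V}(\mathfrak{q}_i)}\mathfrak{q}$ I obtain $J_i\supsetneq\mathfrak{q}_i$ and $I_g=\bigcap_{i=1}^{s}J_i$. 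Finiteness of $s$ together with primality gives: any minimal prime $\mathfrak{r}$ over $I_g$ contains some $J_i$, hence strictly contains $\mathfrak{q}_i$, so $\mathrm{ht}(\mathfrak{r})\geq\mathrm{ht}(\mathfrak{q}_i)+1=2$.

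\textbf{Anticipated obstacle.} The delicate step is upgrading the pointwise height bound on $\mathrm{Im}(g)\cap\mathrm{Supp}(\mathcal{B}/\overline{\mathcal{A}})$ to a height bound on the entire closure $\mathbb{V}(I_g)$: in general the closure of an infinite set of higher-height primes can sprout a lower-height prime (as with $\{(x,y-n)\}_{n\in\mathbb{N}}\subset\mathrm{Spec}(k[x,y])$ whose closure is $\mathbb{V}(y)$). The finite stratification from Prp.\ \ref{support}, combined with the strict inclusion $J_i\supsetneq\mathfrak{q}_i$ inside each stratum, is precisely what circumvents this issue and allows an elementary proof independent of Zariski's main theorem.
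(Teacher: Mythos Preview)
Your argument is correct and follows essentially the same route as the paper: both hinge on the single observation that each $\overline{\mathcal{A}}_{\mathfrak{q}_i}$ is a DVR, so the strictly larger overring $\mathcal{B}_{\mathfrak{q}_i}$ must be all of $\mathrm{Frac}(\mathcal{B})$, whence no prime of $\mathcal{B}$ can contract to $\mathfrak{q}_i$. Your treatment of part~(ii) is in fact more careful than the paper's one-line ``thus the minimal primes of $I_g$ are of height at least~$2$'': you make the passage from the pointwise statement $\mathfrak{q}_i\notin\mathrm{Im}(g)$ to a bound on the Zariski closure explicit via the strict inclusion $J_i\supset\mathfrak{q}_i\mathcal{B}\cap\overline{\mathcal{A}}\supsetneq\mathfrak{q}_i$, which is exactly the step the paper leaves to the reader.
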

\begin{proof} Consider $\rm{(i)}$. Suppose there exists $\mathfrak{q} \in \mathrm{Ass}_{\overline{\mathcal{A}}}(\mathcal{B}/\overline{\mathcal{A}})$. Because $\mathfrak{q} \neq (0)$, by  \cite[Thm.\ 1.1 \rm{(i)}]{Rangachev2} $\overline{\mathcal{A}}_{\mathfrak{q}}$ is a DVR. But $\overline{\mathcal{A}}_{\mathfrak{q}} \neq \mathcal{B}_{\mathfrak{q}}$. Thus $\mathcal{B}_{\mathfrak{q}}=\mathrm{Frac}(\mathcal{B})$. This contradicts the assumption that there exists a prime in $\mathcal{B}$ that contracts to $\mathfrak{q}$. Thus $\mathrm{Ass}_{\overline{\mathcal{A}}}(\mathcal{B}/\overline{\mathcal{A}})$ is empty, and so by Prp.\ \ref{support} $\mathcal{B}=\overline{\mathcal{A}}$. 
Consider $\rm{(ii)}$. If $\mathfrak{q}$ is a minimal prime in $\mathrm{Supp}_{\overline{\mathcal{A}}}(\mathcal{B}/\overline{\mathcal{A}})$, then $\mathrm{ht}(\mathfrak{q})=1$ and $\mathcal{B} \otimes_{\overline{\mathcal{A}}} \kappa(\mathfrak{q})$ is empty as shown above. Thus the minimal primes of $I_g$ are of height at least $2$. 
\end{proof}

Thm.\ \ref{baby} \rm{(ii)} recovers the second part of \cite[Prp.\ 1, (2), pg.\ 210]{Mumford} without assuming that $\overline{\mathcal{A}}$ is a UFD.

\begin{definition}
Let $Q \in \mathrm{Spec}(\mathcal{B})$.  Set  $\mathfrak{p}:=Q \cap \mathcal{A}$. We say that $\mathrm{Spec}(\mathcal{B}) \rightarrow \mathrm{Spec}(\mathcal{A})$ is {\it quasi-finite} at $Q$ if $Q$ is isolated in its fiber, i.e. if the field extension $\kappa(\mathfrak{p}) \subset \kappa(Q)$ is finite and $\dim(\mathcal{B}_{Q}/\mathfrak{p}\mathcal{B}_Q)=0$. 

\end{definition}
The following two corollaries of Thm.\ \ref{baby} are special cases of ZMT. 
\begin{corollary}\label{ZMT1}
 Let $\mathcal{A} \subset \mathcal{B}$ be integral domains. Suppose $\mathcal{A}$ is Noetherian and $\mathcal{B}$ is a finitely generated $\mathcal{A}$-algebra. Let $Q \in \mathrm{Spec}(\mathcal{B})$ and set $\mathfrak{q}:=Q \cap \overline{\mathcal{A}}$. Assume $\mathrm{Spec}(\mathcal{B}) \rightarrow \mathrm{Spec}(\mathcal{A})$ is {\it quasi-finite} at $Q$ and $\mathrm{ht}(\mathfrak{q})=1$. Then there exists $f \in \mathcal{I}_{\mathcal{B}/\overline{\mathcal{A}}} $ with $f \not \in \mathfrak{q}$ such that $\mathcal{B}_f=\overline{\mathcal{A}}_f$.
\end{corollary}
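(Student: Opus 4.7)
The plan is to combine the structural description of $\mathrm{Ass}_{\overline{\mathcal{A}}}(\mathcal{B}/\overline{\mathcal{A}})$ from Theorem~\ref{main}, the DVR overring argument from the proof of Theorem~\ref{baby}(i), and the colon-ideal estimate of Proposition~\ref{minimal}. The first step is to reduce to the case $\mathrm{Frac}(\overline{\mathcal{A}})=\mathrm{Frac}(\mathcal{B})$: otherwise $(0)\in\mathrm{Ass}_{\overline{\mathcal{A}}}(\mathcal{B}/\overline{\mathcal{A}})$, forcing $\mathcal{I}_{\mathcal{B}/\overline{\mathcal{A}}}=(0)$, so no nonzero $f$ qualifies; the hypotheses of quasi-finiteness at $Q$ together with $\mathrm{ht}(\mathfrak{q})=1$ preclude this degenerate situation. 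Under this reduction, Theorem~\ref{main} gives a finite set $\mathrm{Ass}_{\overline{\mathcal{A}}}(\mathcal{B}/\overline{\mathcal{A}})=\{\mathfrak{q}_1,\ldots,\mathfrak{q}_r\}$ with each $\mathfrak{q}_i$ of height one and each $\overline{\mathcal{A}}_{\mathfrak{q}_i}$ a DVR.

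The crux is to show $\mathfrak{q}\ne\mathfrak{q}_i$ for every $i$. Suppose $\mathfrak{q}=\mathfrak{q}_i$; then $\overline{\mathcal{A}}_{\mathfrak{q}}$ is a DVR inside $\mathrm{Frac}(\mathcal{B})$ with $\overline{\mathcal{A}}_{\mathfrak{q}}\subsetneq\mathcal{B}_{\mathfrak{q}}$, and since the only overring of a DVR strictly containing it inside its fraction field is the fraction field itself, $\mathcal{B}_{\mathfrak{q}}=\mathrm{Frac}(\mathcal{B})$. Because $\overline{\mathcal{A}}\setminus\mathfrak{q}\subset\mathcal{B}\setminus Q$, the further localization $\mathcal{B}_Q$ is also the field $\mathrm{Frac}(\mathcal{B})$, so $Q\mathcal{B}_Q=0$; in the domain $\mathcal{B}$ this forces $Q=(0)$, contradicting $Q\cap\overline{\mathcal{A}}=\mathfrak{q}$ of height one. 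Hence $\mathfrak{q}\notin\mathrm{Ass}_{\overline{\mathcal{A}}}(\mathcal{B}/\overline{\mathcal{A}})$. If $\mathfrak{q}$ were in $\mathrm{Supp}_{\overline{\mathcal{A}}}(\mathcal{B}/\overline{\mathcal{A}})$, Proposition~\ref{support} would give some $\mathfrak{q}_i\subset\mathfrak{q}$, and since both are of height one this forces $\mathfrak{q}=\mathfrak{q}_i$, contradicting the previous step. Therefore $\mathcal{I}_{\mathcal{B}/\overline{\mathcal{A}}}\not\subset\mathfrak{q}$, and I pick $f\in\mathcal{I}_{\mathcal{B}/\overline{\mathcal{A}}}\setminus\mathfrak{q}$.

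Finally I check $\mathcal{B}_f=\overline{\mathcal{A}}_f$. For any $b\in\mathcal{B}\setminus\overline{\mathcal{A}}$, the colon ideal $J_b:=(\overline{\mathcal{A}}:_{\overline{\mathcal{A}}}b)$ is non-unit, and by Proposition~\ref{minimal} its minimal primes lie among the $\mathfrak{q}_i$; hence $\sqrt{J_b}\supset\bigcap_{i=1}^{r}\mathfrak{q}_i=\mathcal{I}_{\mathcal{B}/\overline{\mathcal{A}}}$ contains $f$. So $f^n\in J_b$ for some $n$, giving $f^n b\in\overline{\mathcal{A}}$ and thus $b\in\overline{\mathcal{A}}_f$; hence $\mathcal{B}\subset\overline{\mathcal{A}}_f$ and $\mathcal{B}_f=\overline{\mathcal{A}}_f$. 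The main obstacle is the DVR overring step that rules out $\mathfrak{q}\in\mathrm{Ass}$: it uses $\mathrm{ht}(\mathfrak{q})=1$ together with the equal-fraction-field reduction to mimic the local shape of the argument in Theorem~\ref{baby}(i); everything else is bookkeeping through Propositions~\ref{minimal} and~\ref{support}.
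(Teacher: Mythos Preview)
Your overall strategy matches the paper's: establish $\mathfrak{q}\notin\mathrm{Supp}_{\overline{\mathcal{A}}}(\mathcal{B}/\overline{\mathcal{A}})$ via the DVR--overring trick of Theorem~\ref{baby}(i), then invoke Propositions~\ref{support} and~\ref{minimal} to produce $f$ and verify $\mathcal{B}_f=\overline{\mathcal{A}}_f$. Those parts are carried out correctly.

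The gap is in your first step. You assert that ``the hypotheses of quasi-finiteness at $Q$ together with $\mathrm{ht}(\mathfrak{q})=1$ preclude'' the case $\mathrm{Frac}(\overline{\mathcal{A}})\neq\mathrm{Frac}(\mathcal{B})$, but you give no argument. This is not a formality: it is exactly the substantive first half of the paper's proof. The paper uses $\mathrm{ht}(\mathfrak{q})=1$ to conclude (via Matsumura) that $\overline{\mathcal{A}}_{\mathfrak{q}}$ is Noetherian and universally catenary, then applies the dimension formula
\[
\mathrm{ht}(Q)+\mathrm{tr.\,deg}_{\kappa(\mathfrak{q})}\kappa(Q)=\mathrm{ht}(\mathfrak{q})+\mathrm{tr.\,deg}_{\overline{\mathcal{A}}}\mathcal{B},
\]
combines it with $\mathrm{ht}(Q)\le\mathrm{ht}(\mathfrak{q})$ (from $\mathfrak{q}\mathcal{B}_Q$ being $Q\mathcal{B}_Q$-primary) and $\mathrm{tr.\,deg}_{\kappa(\mathfrak{q})}\kappa(Q)=0$ (from quasi-finiteness), and reads off $\mathrm{tr.\,deg}_{\overline{\mathcal{A}}}\mathcal{B}=0$. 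Without this, your second paragraph breaks down: the claim ``the only overring of a DVR strictly containing it inside its fraction field is the fraction field itself'' needs $\mathcal{B}_{\mathfrak{q}}\subset\mathrm{Frac}(\overline{\mathcal{A}})$, which is exactly the equality of fraction fields you have not yet established. Observing that the \emph{conclusion} $\mathcal{B}_f=\overline{\mathcal{A}}_f$ would force $\mathrm{Frac}(\overline{\mathcal{A}})=\mathrm{Frac}(\mathcal{B})$ is of course circular as a proof step. You need to supply an argument here---either the dimension formula as in the paper, or an independent argument such as openness of the quasi-finite locus forcing quasi-finiteness at the generic point.
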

\begin{proof}
Because $\mathrm{ht}(\mathfrak{q})=1$, then by Cor.\ 2 to \cite[Thm.\ 31.7]{Matsumura} $\overline{\mathcal{A}}_{\mathfrak{q}}$ is a universally catenary Noetherian ring. Applying the dimension formula for $\overline{\mathcal{A}}_{\mathfrak{q}}$ and $\mathcal{B}_{\mathfrak{q}}$ we get
\begin{equation}\label{dim f-la}
 \mathrm{ht}(Q)+\mathrm{tr.\ deg}_{\kappa(\mathfrak{q})}\kappa(Q)=\mathrm{ht}(\mathfrak{q})+\mathrm{tr.\ deg}_{\overline{\mathcal{A}}}\mathcal{B} 
\end{equation}
Because $\mathfrak{q}\mathcal{B}_{Q}$ is $Q\mathcal{B}_{Q}$-primary we have $\mathrm{ht}(\mathfrak{q})\geq \mathrm{ht}(Q)$. 
But $\kappa(Q)$ is a finite field extension of $\kappa(\mathfrak{q})$. So $\mathrm{tr.\ deg}_{\kappa(\mathfrak{q})}\kappa(Q)=0$. Thus $\mathrm{ht}(Q)=\mathrm{ht}(\mathfrak{q})$ and $\mathrm{Frac}(\overline{\mathcal{A}})=\mathrm{Frac}(\mathcal{B})$. Applying Thm.\ \ref{baby} to $\overline{\mathcal{A}}_{\mathfrak{q}}$ and $\mathcal{B}_{\mathfrak{q}}$ we get $\overline{\mathcal{A}}_{\mathfrak{q}}=\mathcal{B}_{\mathfrak{q}}$. Because $\mathrm{Frac}(\overline{\mathcal{A}})=\mathrm{Frac}(\mathcal{B})$, by Prp.\ \ref{minimal} for each $b \in \mathcal{B}$ there exists a positive integer $k_b$ such that $\mathcal{I}_{\mathcal{B}/\overline{\mathcal{A}}}^{k_b}b \in \overline{\mathcal{A}}$. Thus
for each $f \in \mathcal{I}_{\mathcal{B}/\overline{\mathcal{A}}}$ we have $\overline{\mathcal{A}}_f=\mathcal{B}_f$. By Prp.\ \ref{support} $\mathcal{I}_{\mathcal{B}/\overline{\mathcal{A}}} \not\subset \mathfrak{q}$. So we can select $f \in \mathcal{I}_{\mathcal{B}/\overline{\mathcal{A}}}$ with $f \not \in \mathfrak{q}$.
\end{proof}

\begin{corollary}\label{Mum} Let $\mathcal{A} \subset \mathcal{B}$ be integral domains. Suppose $\mathcal{A}$ is Noetherian and $\mathcal{B}$ is a finitely generated $\mathcal{A}$-algebra. Assume $\mathcal{A}$ is universally catenary and $\overline{\mathcal{A}}$ is a UFD.  Let $Q \in \mathrm{Spec}(\mathcal{B})$ and set $\mathfrak{q}:=Q \cap \overline{\mathcal{A}}$. Suppose $\mathrm{Spec}(\mathcal{B}) \rightarrow \mathrm{Spec}(\mathcal{A})$ is {\it quasi-finite} at $Q$. Then there exists $f \in \mathcal{I}_{\mathcal{B}/\overline{\mathcal{A}}} $ with $f \not \in \mathfrak{q}$ such that $\mathcal{B}_f=\overline{\mathcal{A}}_f$.
\end{corollary}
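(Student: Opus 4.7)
The plan is to follow the template of the proof of Cor.\ \ref{ZMT1}, with universal catenarity of $\mathcal{A}$ playing the role the height-one hypothesis played there, and with the UFD structure of $\overline{\mathcal{A}}$ providing the extra leverage to handle primes $\mathfrak{q}$ of arbitrary height. First I would establish $\mathrm{Frac}(\overline{\mathcal{A}})=\mathrm{Frac}(\mathcal{B})$ by a verbatim copy of the fraction-field argument in Cor.\ \ref{ZMT1}: universal catenarity of $\mathcal{A}$ makes the dimension formula available at $(Q,\mathfrak{p})$ with $\mathfrak{p}:=Q\cap\mathcal{A}$, and quasi-finiteness of $Q$ (which gives $\mathrm{tr.\ deg}_{\kappa(\mathfrak{p})}\kappa(Q)=0$ and $\mathrm{ht}(Q)\le\mathrm{ht}(\mathfrak{p})$ from the $Q\mathcal{B}_Q$-primariness of $\mathfrak{p}\mathcal{B}_Q$) forces $\mathrm{tr.\ deg}_{\mathcal{A}}\mathcal{B}=0$. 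Once this equality is in hand, the corollary reduces to proving $\mathcal{I}_{\mathcal{B}/\overline{\mathcal{A}}}\not\subset\mathfrak{q}$; then any $f\in\mathcal{I}_{\mathcal{B}/\overline{\mathcal{A}}}\setminus\mathfrak{q}$ will satisfy $\mathcal{B}_f=\overline{\mathcal{A}}_f$ by the Prp.\ \ref{minimal} argument at the end of the proof of Cor.\ \ref{ZMT1}.

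To prove $\mathcal{I}_{\mathcal{B}/\overline{\mathcal{A}}}\not\subset\mathfrak{q}$, I would argue by contradiction. If $\mathcal{I}_{\mathcal{B}/\overline{\mathcal{A}}}\subset\mathfrak{q}$ then Prp.\ \ref{support} gives $\mathcal{B}_{\mathfrak{q}}\ne\overline{\mathcal{A}}_{\mathfrak{q}}$, so some $b_{1}\in\mathcal{B}$ fails to lie in $\overline{\mathcal{A}}_{\mathfrak{q}}$. I write $b_{1}=x_{1}/y_{1}$ in lowest terms in the UFD $\overline{\mathcal{A}}$; coprimality of $x_{1},y_{1}$ together with $b_{1}\notin\overline{\mathcal{A}}_{\mathfrak{q}}$ forces $y_{1}\in\mathfrak{q}$ (otherwise $y_1$ would be a unit in $\overline{\mathcal{A}}_\mathfrak{q}$). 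Since $y_{1}\in\mathfrak{q}\subset Q$, I pick a minimal prime $Q_{1}\subset Q$ of $y_{1}\mathcal{B}$; Krull's principal ideal theorem gives $\mathrm{ht}(Q_{1})=1$. As $x_{1}=y_{1}b_{1}\in Q_{1}$, the contraction $\mathfrak{q}_{1}':=Q_{1}\cap\overline{\mathcal{A}}$ contains the ideal $(x_{1},y_{1})$, which in the UFD is a proper ideal (otherwise $1/y_{1}\in\mathcal{B}$, contradicting $y_{1}\in Q$) of height at least two by coprimality. Hence $\mathrm{ht}(\mathfrak{q}_{1}')\ge 2$.

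The contradiction then comes from comparing two computations of $\mathrm{tr.\ deg}_{\kappa(\mathfrak{p}_{1}')}\kappa(Q_{1})$, where $\mathfrak{p}_{1}':=Q_{1}\cap\mathcal{A}$. The dimension formula for the universally catenary $\mathcal{A}$ at $(Q_{1},\mathfrak{p}_{1}')$, together with $\mathrm{ht}(Q_{1})=1$ and $\mathrm{tr.\ deg}_{\mathcal{A}}\mathcal{B}=0$, gives $\mathrm{ht}(\mathfrak{p}_{1}')=1+\mathrm{tr.\ deg}_{\kappa(\mathfrak{p}_{1}')}\kappa(Q_{1})$; Cohen's inequality $\mathrm{ht}(\mathfrak{q}_{1}')\le\mathrm{ht}(\mathfrak{p}_{1}')$ for the integral extension $\mathcal{A}\subset\overline{\mathcal{A}}$ combined with $\mathrm{ht}(\mathfrak{q}_{1}')\ge 2$ then forces $\mathrm{tr.\ deg}_{\kappa(\mathfrak{p}_{1}')}\kappa(Q_{1})\ge 1$. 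On the other hand, the locus of points at which $\mathrm{Spec}(\mathcal{B})\to\mathrm{Spec}(\mathcal{A})$ is quasi-finite is open, and open sets are stable under generalization, so $Q_{1}\subset Q$ is itself a quasi-finite point, and $\mathrm{tr.\ deg}_{\kappa(\mathfrak{p}_{1}')}\kappa(Q_{1})=0$, a contradiction. The main obstacle is the construction of the height-one prime $Q_{1}\subset Q$ whose contraction $\mathfrak{q}_{1}'$ has height at least two; this is precisely where the UFD hypothesis is genuinely used, via coprimality of the numerator and denominator of $b_{1}$, and it is what lets the dimension-formula argument push the residue-field transcendence degree at $Q_{1}$ above zero.
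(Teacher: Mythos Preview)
Your argument is correct and takes a genuinely different route from the paper's. The paper proceeds by induction on $\mathrm{ht}(\mathfrak{q})$: given an arbitrary height-one prime $\mathfrak{q}_1=(x_1)\subset\mathfrak{q}$, it extends $x_1$ to a system of parameters $x_1,\ldots,x_n$ for $\overline{\mathcal{A}}_{\mathfrak{q}}$, chooses a minimal prime $Q'\subset Q$ over $(x_1,\ldots,x_{n-1})\mathcal{B}$, and applies the inductive hypothesis at $Q'$ to deduce $\mathfrak{q}_1\notin\mathrm{Ass}_{\overline{\mathcal{A}}}(\mathcal{B}/\overline{\mathcal{A}})$; since every associated prime has height one, this gives $\mathcal{I}_{\mathcal{B}/\overline{\mathcal{A}}}\not\subset\mathfrak{q}$. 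Your approach is a one-shot contradiction: from a lowest-terms fraction $b_1=x_1/y_1$ you manufacture a height-one prime $Q_1\subset Q$ in $\mathcal{B}$ whose contraction to the UFD $\overline{\mathcal{A}}$ contains the coprime pair $(x_1,y_1)$ and hence has height $\ge 2$, and then the dimension formula for $\mathcal{A}\subset\mathcal{B}$ forces $\mathrm{tr.\,deg}_{\kappa(\mathfrak{p}_1')}\kappa(Q_1)\ge 1$. Both arguments ultimately need that a generization of $Q$ is again a quasi-finite point (you invoke openness of the quasi-finite locus explicitly; the paper does so implicitly when it asserts that $Q'$ is isolated in its fiber). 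Your route is shorter and sidesteps the construction of a system of parameters in $\overline{\mathcal{A}}_{\mathfrak{q}}$, a step that tacitly assumes this localization is Noetherian. One minor terminological point: the inequality $\mathrm{ht}(\mathfrak{q}_1')\le\mathrm{ht}(\mathfrak{p}_1')$ is the standard consequence of incomparability for integral extensions rather than Cohen's dimension inequality.
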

\begin{proof}
An application of the dimension formula as in (\ref{dim f-la}) for $\mathcal{A}$, $\mathcal{B}$, $Q$ and $\mathfrak{p}:=Q \cap \mathcal{A}$ implies that $\mathrm{ht}(Q)=\mathrm{ht}(\mathfrak{p})$ and $\mathrm{Frac}(\mathcal{B})=\mathrm{Frac}(\overline{\mathcal{A}})$. By \cite[Prp.\ 4.8.6]{Huneke} $\mathrm{ht}(\mathfrak{q})=\mathrm{ht}(Q)$.
We proceed by induction on $\mathrm{ht}(\mathfrak{q})$. For the height zero case simply set $f$ to be the common denominator of the generators of $\mathcal{B}$ as an $\overline{\mathcal{A}}$-algebra. The case $\mathrm{ht}(\mathfrak{q})=1$ was handled in Cor.\ \ref{ZMT1}. Suppose $\mathrm{ht}(\mathfrak{q})=n$. Let $\mathfrak{q}_1$ be a prime ideal in $\overline{\mathcal{A}}$ of height one. Because $\overline{\mathcal{A}}$ is a UFD, there exists $x_1 \in \overline{\mathcal{A}}$ such that $\mathfrak{q}_1 = (x_1)$. Because $\mathcal{A}$ is universally catenary and $\overline{\mathcal{A}}$ is integral over $\mathcal{A}$, then by \cite[Thm.\ 3.1]{Ratliff} there exists a chain of prime ideals $(0) \subset \mathfrak{q}_1 \subset \ldots  \subset \mathfrak{q}_{n}:=\mathfrak{q}$. Assume $\overline{\mathcal{A}}$ is local at $\mathfrak{q}$. Let $x_1, x_2, \ldots, x_n$ be a system of parameters for $\mathfrak{q}$. Consider $(x_1, \ldots, x_{n-1})\mathcal{B}$. Then there exists a prime $Q' \subset Q$ minimal over $(x_1, \ldots, x_{n-1})\mathcal{B}$.  Since $\mathrm{ht}(Q)=n$, then by Krull's height theorem $Q' \neq Q$. Set $\mathfrak{q}':=Q' \cap \overline{\mathcal{A}}$. Then $\mathfrak{q}' \neq \mathfrak{q}$ because $Q$ is isolated in its fiber. Thus $\mathrm{ht}(\mathfrak{q}')=n-1$ and $Q'$ is isolated in its fiber over $Q' \cap \mathcal{A}$. Also, $\mathfrak{q}_1 \subset \mathfrak{q}$.  By the induction hypothesis $\overline{\mathcal{A}}_{\mathfrak{q}'}=\mathcal{B}_{\mathfrak{q}'}$. In particular, $\mathfrak{q}_1 \not \in \mathrm{Ass}_{\overline{\mathcal{A}}}(\mathcal{B}/\overline{\mathcal{A}})$. Therefore, $\overline{\mathcal{A}}_{\mathfrak{q}}=\mathcal{B}_{\mathfrak{q}}$ by Prp.\ \ref{support}. The rest follows as in the proof of Cor.\ \ref{ZMT1}. 
\end{proof}

%Cor.\ \ref{Mum} recovers \cite[Prp.\ 1 (1), pg.\ 210]{Mumford} where it's assumed that $\mathcal{B}$ and $\overline{\mathcal{A}}$ have the same field of fractions with the weaker assumption that $\mathrm{Spec}(\mathcal{B}) \rightarrow \mathrm{Spec}(\mathcal{A})$ is quasi-finite at a prime in $\mathcal{B}$. 


\begin{thebibliography}{abcdef}
\bibitem[AtM69]{Atiyah}
 Atiyah, M., Macdonald, I., ``Introduction to commutative algebra.'' Addison-Wesley Publishing Co., 1969.

\bibitem[Bour75]{Bourbaki}
Bourbaki, N.,`` \'El\'ements de Math\'ematique. Alg\'ebre Commutative. Chapitres 5 \'a 7." Herman, Paris, 1975.


\bibitem[EGAIV]{Grothendieck1}
Grothendieck, A., and Dieudonn\'{e}, J., "EGA IV. \'{E}tude globale locale des sch\'{e}mas et des morphismes de sch\'{e}mas," Troisieme partie, Publications Math\'ematiques de l'IH\'ES, 28, (1966).

\bibitem[EGAIII]{Grothendieck2} Grothendieck, A., and Dieudonn\'{e}, J., "EGA III. \'{E}tude cohomologique des faisceaux coh\'erents," Premi\'ere partie, Publications Math\'ematiques de l'IH\'ES, 21, (1961).

\bibitem[Mat86]{Matsumura}
Matsumura, H., ``Commutative ring theory.'' Cambridge University Press, 1986. 

\bibitem[Mum99]{Mumford}
Mumford, D., ``The Red Book of Varieties and Schemes." Lecture Notes in Mathematics, Springer-Verlag Berlin, Heidelberg, 1999.

\bibitem[Ran20]{Rangachev2}
Rangachev, A., {\it Associated primes and integral closure of Noetherian rings,} Journal of Pure and Applied Algebra,  vol. {\bf 225}, Issue 5, (2021):
\url{https://arxiv.org/abs/1907.10754}. 


\bibitem[Ran18]{Rangachev}
Rangachev, A., {\it Associated points and integral closure of modules}, Journal of Algebra, {\bf 508} (2018), 301--338.

\bibitem[Ratl69]{Ratliff}
Ratliff, L.\ J.\ {\it On quasi-unmixed local domains, the altitude formula, and the chain condition for prime ideals. \rm{I}.}
Amer.\ J.\ Math.\ {\bf 91} (1969), 508--528.

\bibitem[R87]{Rees87}
Rees, D., {\it Reduction of modules,} Math.\ Proc.\ Cambridge Philos.\ Soc.\ {\bf 101} (1987), 431–-449.

\bibitem[R56]{Rees56}
Rees, D., {\it Valuations associated with ideals II,} J.\ London Math.\ Soc.\  , {\bf 31} (1956), 221--228.


\bibitem[Stks]{Stacks}
The {Stacks Project Authors}, {\itshape Stacks Project}, \url{http://stacks.math.columbia.edu}, 2016. 

\bibitem[SH06]{Huneke}
Swanson, I., and Huneke, C., ``Integral closure of ideals, rings, and modules.'' London Mathematical Society Lecture Note Series, vol. 336, Cambridge University Press, Cambridge, 2006.
\end{thebibliography}
\end{document}